\newcommand{\R}{\mathbb{R}}
\newcommand{\C}{\mathbb{C}}
\newcommand{\ov}[1]{\overline{#1}}
\renewcommand\Re{\operatorname{Re}}
\renewcommand\Im{\operatorname{Im}}
\newcommand\supp{\operatorname{supp}}
\newcommand\p{\operatorname{\partial}}
\newtheorem{thm}{Theorem}[section]
\newtheorem{lem}[thm]{Lemma}
\newtheorem{prop}[thm]{Proposition}
\theoremstyle{definition}
\numberwithin{equation}{section}
\begin{document}

\title[]{A uniqueness result for an inverse problem of the steady state convection-diffusion equation}
\author[]{Valter Pohjola}
\date{}
\keywords{Inverse boundary value problem; Convection-Diffusion; Advection-Difffusion;Magnetic Schr\"odinger operator.}
\address{Department of Mathematics and Statistics, Helsingin yliopisto / Helsingfors universitet / University of Helsinki, Finland}
\email{valter.pohjola@helsinki.fi}

\begin{abstract} 
We consider the inverse boundary value problem for the steady state 
convection diffusion equation. We prove that a velocity field $V$, is uniquely determined by the 
Dirichlet-to-Neumann map, when $V \in C^{0,\gamma} (\Omega)$, $2/3< \gamma \leq 1$, i.e.
when $V$ is a H\"older continuous vector field with $2/3< \gamma \leq 1$.
\end{abstract}

\maketitle

\section{Introduction}

\noindent The steady state convection-diffusion equation 
\begin{align}\label{eq:conv_prob}
             (-\Delta + V \cdot  \nabla) u &= 0, \quad \text{in}\quad \Omega, \\
                          u|_{\p \Omega}  &= f, \nonumber
\end{align}
can be seen as a time independent model for 
transport phenomena in a fluid due to a diffusion 
process and convection caused by the fluid velocity $V$. One specific model
is heat transfer in a fluid, in which case $u$ is taken as the temperature.
In the following we will consider this problem  assuming 
that\footnote{Here $H^s(\Omega)$ refers to the $L^2$ based Sobolev space with smoothness index $s$.
}
$f \in H^{1/2}(\p \Omega)$ and $V \in C^{0,\gamma}(\Omega,\R^n)$, with $2/3 < \gamma \leq 1$ and
where the set $\Omega \subset \R^n$, $n \geq 3$ will be a bounded open set with Lipschitz boundary.
Recall that the space of H\"older continuous functions, $C^{0,\gamma}(\Omega)$, 
$0 < \gamma \leq 1$ is defined as 
\begin{align*}
    C^{0,\gamma}(\Omega) =  \Big\{ g \in C(\ov{\Omega}) \;:\;
    |g|_{C^{0,\gamma}(\Omega)} :=  \sup_{x,y \in \Omega, x\neq y}\frac{|g(x)-g(y)|}{|x-y|^\gamma} < \infty \Big\},
\end{align*}
equipped  with the norm
\begin{align*}
    \|g\|_{C^{0,\gamma}(\Omega)} :=  \|g\|_{L^\infty(\Omega)} + |g|_{C^{0,\gamma}(\Omega)}.
\end{align*}

A physical formulation of the inverse problem we are about to consider, is to think of 
$u$ as the temperature in the region $\Omega$, we then ask if it is possible to determine the velocity field $V$ 
in the region $\Omega$ by controlling the temperature on the boundary and by measuring the
heat flux on the boundary. 

The boundary measurements are mathematically modeled by the 
so called Dirichlet to Neumann map (DN-map for short). 
This is the map $\Lambda_V$ taking $f$ to $\p_n u := (n \cdot \nabla u)|_{\p\Omega}$,
where $n$ is the outward pointing unit normal to $\p \Omega$.
The unique solvability of the Dirichlet problem  \eqref{eq:conv_prob} in $H^1(\Omega)$ 
(see  Theorems 8.1 and 8.3 in \cite{GT}) shows that the DN-map well defined. 
The normal derivative  $\p_n u$ needs, however in this case to be understood in a distributional sense,
because of the non-smooth solutions we consider.
The DN-map can then be defined in a weak sense, as the operator
$\Lambda_V : H^{1/2}(\p\Omega) \to H^{-1/2}(\p\Omega)$ given by
\begin{align*}
\langle \Lambda_V f,\varphi \rangle := \int_{\Omega} (\nabla u \cdot \nabla \phi + V\cdot \nabla u\, \phi)dx,
\end{align*}
where $L_V u:= (-\Delta + V \cdot  \nabla) u =0$, in $\Omega$, $u|_{\Omega} = f$ 
and $\varphi \in H^{1/2}(\p \Omega)$, 
$\phi \in H^1(\Omega)$, with $\phi |_{\p\Omega} = \varphi$. Here $\langle \cdot,\cdot \rangle$ 
denotes the distribution
duality on $\p\Omega$. 
Notice also that the definition is independent of the choice of an extension
$\phi$ of $\varphi$.

The mathematical form of the inverse problem is then the question, if the DN-map of
the Dirichlet problem \eqref{eq:conv_prob} determines the velocity field $V$.
The main result of this paper is the following theorem.

\begin{thm} \label{thm1} 
    Let $V_j \in C^{0,\gamma}(\Omega,\R^n)$, $j=1,2$ with $2/3 < \gamma \leq 1$.
    Assume that $\Lambda_{V_1}=\Lambda_{V_2}$, then $V_1 = V_2$ in $\Omega$.
\end{thm}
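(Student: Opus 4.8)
The plan is to reduce the question to the analysis of complex geometric optics (CGO) solutions, following the strategy developed for the magnetic Schr\"odinger operator. Writing $L_V = -\Delta + V\cdot\nabla$ and its formal adjoint $L_V^* = -\Delta - V\cdot\nabla - (\nabla\cdot V)$, the first step is to convert the hypothesis $\Lambda_{V_1}=\Lambda_{V_2}$ into an interior integral identity. Subtracting the two weak formulations of the DN-map and using that solutions with the same Dirichlet data produce the same Neumann data, one obtains
\begin{align*}
\int_{\Omega} (V_1 - V_2)\cdot\nabla u_1 \, u_2 \, dx = 0
\end{align*}
for every pair $u_1,u_2 \in H^1(\Omega)$ with $L_{V_1}u_1 = 0$ and $L_{V_2}^* u_2 = 0$ in $\Omega$. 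It is convenient to record the algebraic identity $L_V = \Ld := \sum_{j=1}^{n}(-i\p_j + A_j)^2 + q$ with $A = \tfrac{i}{2}V$ and $q = \tfrac14|V|^2 - \tfrac12(\nabla\cdot V)$, which recasts $L_V$ as a magnetic Schr\"odinger operator with a purely imaginary potential. Since $V$ is only H\"older continuous, the divergence $\nabla\cdot V$ is a priori only a distribution, and keeping track of it is one of the places where the regularity assumption is felt.

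The heart of the argument is the construction of CGO solutions $u = e^{\rho\cdot x}(a+r)$, with $\rho \in \C^n$, $\rho\cdot\rho = 0$ and $|\rho| = s \to \infty$. Inserting this ansatz into $L_V u = 0$ and using $\rho\cdot\rho = 0$, the leading term in $s$ gives the transport equation
\begin{align*}
\rho\cdot\nabla a = \tfrac12(\rho\cdot V)\, a .
\end{align*}
Writing $\rho = s\mu$ with $\mu\cdot\mu = 0$, $\mu = \omega_1 + i\omega_2$ and $\omega_1,\omega_2$ orthonormal, this is an $\overline{\partial}$-equation in the plane spanned by $\omega_1,\omega_2$; I would solve it by the Cauchy transform, so that $a = e^{\Phi}$ with $\mu\cdot\nabla\Phi = \tfrac12\,\mu\cdot V$, the H\"older regularity of $V$ being used to obtain that $\Phi$, and hence $a$, is $C^{1,\gamma}$ and bounded uniformly in the parameters. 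The remainder $r$ is then produced by solving $e^{-\rho\cdot x}L_V e^{\rho\cdot x} r = -(\text{error})$ via a H\"ahner-type weighted $L^2$ (Carleman) estimate for the conjugated operator, yielding $\|r\|_{L^2(\Omega)} = o(1)$ as $s\to\infty$; the same construction applies to $L_{V_2}^*$. Controlling the error term and the norm of $r$ for a merely H\"older continuous $V$ is the main obstacle: it is precisely the balance between the $s^{-1}$ gain of the solution operator and the $s$-growth created by differentiating the amplitude that forces the lower bound $\gamma > 2/3$.

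With the CGO solutions in hand I would choose $\rho_1 + \rho_2 = i\xi$ for a fixed frequency $\xi \in \R^n$ orthogonal to $\omega_1,\omega_2$, substitute $u_1,u_2$ into the integral identity, divide by $s$ and let $s\to\infty$. Only the term where $\nabla$ falls on the exponential survives, and using $r_j \to 0$ one is left with a relation of the form
\begin{align*}
\int_{\Omega} e^{i x\cdot\xi}\, \mu\cdot(V_1 - V_2)\, a_1^0 a_2^0 \, dx = 0,
\end{align*}
valid for all admissible $\xi$ and complex null directions $\mu$. Letting $\mu$ range over the null directions orthogonal to $\xi$ shows that the component of $\widehat{V_1 - V_2}(\xi)$ transverse to $\xi$ vanishes; equivalently $\Curl V_1 = \Curl V_2$ in $\Omega$, so that $V_1 - V_2 = \nabla\psi$ for some scalar function $\psi$.

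It then remains to show that $\psi$ is constant. Carrying the CGO expansion to the next order (the $s^0$ term) and repeating the limiting procedure recovers the zeroth-order coefficients, giving $q_1 = q_2$, i.e. $\tfrac14(|V_1|^2 - |V_2|^2) = \tfrac12(\nabla\cdot(V_1 - V_2))$. Combined with $V_1 - V_2 = \nabla\psi$ and a boundary determination result giving $V_1 = V_2$ on $\p\Omega$ (so that $\psi$ may be normalized to vanish on $\p\Omega$), this yields the homogeneous problem
\begin{align*}
\Delta\psi - \tfrac12(V_1 + V_2)\cdot\nabla\psi = 0 \quad \text{in } \Omega, \qquad \psi|_{\p\Omega} = 0 .
\end{align*}
Since this elliptic operator has no zeroth-order term, the maximum principle forces $\psi \equiv 0$, whence $V_1 = V_2$ in $\Omega$, completing the proof.
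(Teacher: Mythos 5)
Your overall strategy coincides with the paper's: derive an interior integral identity from $\Lambda_{V_1}=\Lambda_{V_2}$, build CGO solutions $e^{\rho\cdot x}(e^{\Phi}+r)$ with the amplitude obtained from the Cauchy transform applied to the transport equation and the remainder from a Carleman estimate for the conjugated operator, recover $\nabla\times V_1=\nabla\times V_2$ by letting the complex null direction vary, then recover the zeroth-order coefficient, and finish with the Poincar\'e lemma and the maximum principle. The boundary determination, the normalization $\psi|_{\partial\Omega}=0$, and your final equation $\Delta\psi-\tfrac12(V_1+V_2)\cdot\nabla\psi=0$ (the same equation the paper writes as $\Delta\psi-V_1\cdot\nabla\psi+\tfrac12(\nabla\psi)^2=0$ after substituting $V_2=V_1-\nabla\psi$) all match; you only omit the routine Schauder step showing $\psi\in C^2$ so that the classical maximum principle applies.

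The genuine gap is in the step you dispatch in one sentence: ``carrying the CGO expansion to the next order \dots recovers the zeroth-order coefficients, giving $q_1=q_2$.'' Two things are missing. First, before taking that limit one must remove the first-order discrepancy: having shown $V_1-V_2=\nabla\psi$, one replaces $A_2$ by $A_2+\nabla\psi=A_1$ using the gauge invariance of the Cauchy data sets, so that the magnetic terms cancel identically in the integral identity; otherwise the $s^0$ limit still carries uncontrolled contributions from the first-order term. Second, and more seriously, $q_j$ contains $\nabla\cdot V_j$, which for $V_j\in C^{0,\gamma}$ is only a distribution of the form $\nabla\cdot F_j$; the corresponding term in the integral identity is $\int F\cdot\nabla(u_1\overline{u_2})\,dx$, and when the gradient falls on the remainders one meets $\|\nabla r_j\|_{L^2}=\mathcal{O}(h^{\gamma/2-1})$, which does not tend to zero. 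The paper's resolution is to mollify $F$ at scale $\theta=h^{-1}$, integrate by parts in the mollified piece, and use the H\"older modulus $\|F-F^\sharp\|_{L^\infty}\lesssim\theta^{-\gamma}$ on the difference; the resulting bound $h^{(3\gamma-2)/2}$ is exactly what forces $\gamma>2/3$. You instead attribute the constraint to the balance in the remainder construction, but the CGO construction (with mollification at scale $\theta=h^{-1/2}$) and the recovery of the magnetic field both go through for every $\gamma\in(0,1]$; without the mollification argument for the divergence term, your proof of $q_1=q_2$ does not close.
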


\bigskip
The first uniqueness result for the above inverse problem was given 
by Cheng, Nakamura and Sommersalo in \cite{CNS}, where they prove the 
unique determination of the velocity field $V$, for $V \in C^{\infty}(\ov{\Omega})$, and
$\p \Omega \in C^\infty$.
Salo improved this in \cite{Sa}, where it is shown that
the result also holds when $V$ is Lipschitz continuous, i.e. $V \in C^{0,1}(\Omega)$.
This was in turn improved by Knudsen and Salo
in \cite{KS} where they prove that $V$ can be any H\"older continuous function 
provided that $\nabla \cdot V \in L^\infty$. Theorem \ref{thm1} improves on this by showing that 
the restriction $\nabla \cdot V \in L^\infty$, is unnecessary for 
H\"older continuous vector fields $V \in C^{0,\gamma}(\Omega)$, when $2/3 < \gamma \leq 1$.

The inverse problem of the closely related magnetic Schr\"odinger equation, was first studied by
Sun in \cite{S}. There have been several improvements of this result by various authors. The 
sharpest and most recent result is given by Krupchyk and Uhlmann in \cite{KU} where they prove that
the inverse problem is solvable for an electric potential $q\in L^\infty$ and  a magnetic 
potential $A \in L^\infty$.

A first remark on Theorem \ref{thm1} concerns its relations to the 
celebrated Calderon problem (see e.g. \cite{SyU}). The Calderon problem asks if one can determine the 
conductivity in the interior of an object by measuring the current on the boundary, when one
controls the voltage on the boundary (or vice versa), or in more mathematical terms if the 
DN-map corresponding to a Dirichlet problem of the conductivity equation
$\nabla \cdot (\sigma \nabla u) = 0,$
where $\sigma$ is the conductivity, determines the conductivity.
Writing the conductivity equation in non-divergence form we get that
\[ 
\Delta u + \nabla \log (\sigma )\cdot \nabla u  = 0 . \]
This shows that the  \eqref{eq:conv_prob} is a more general and therefore a more 
difficult problem then the Calderon problem.

As a second remark on Theorem \ref{thm1} we point out that 
the over all method of proving Theorem \ref{thm1} is to reduce it to an inverse problem
for the magnetic Schrödinger equation, which is a self-adjoint first order perturbation of the Laplacian.
We will more specifically be utilizing the method of proving uniqueness for the inverse 
problem of the magnetic Schr\"odinger equation given in  \cite{KU}.
One of the main ideas is that one can still use the methods of
\cite{KU} for electric potentials with  worse regularity of a specific distributional form,
provided one assumes that the magnetic potentials are more regular.

The paper is organized as follows. In section 2 we reduce Theorem \ref{thm1} to a claim about the magnetic
Schr\"odinger operator. Section 3 is devoted to constructing complex geometric optics solutions. 
In section 4 we prove the unique determination of the magnetic field and in section 5 we 
prove the unique determination of the electric potential.

\section{Reduction to the Magnetic Schr\"odinger case}
The purpose of this section is to reduce Theorem \ref{thm1} to a similar statement 
concerning the magnetic Schr\"odinger operator.
The argument is formulated by  Cheng, Nakamura and Sommersalo in \cite{CNS} 
and by Salo in \cite{Sa}.
The magnetic Schr\"odinger operator is formally given by
\begin{align*}
L_{A,q}u  = -\Delta u -iA\cdot \nabla u -i \nabla \cdot (Au)+ (A^2+q)u.
\end{align*}
We are going to consider the case where $A\in C^{0,\gamma}(\Omega,\R^n)$ 
and $q=\nabla \cdot F + p$, with $F\in C^{0,\gamma}(\Omega,\R^n)$  and $p \in L^\infty(\Omega, \C)$. 
Hence we need to understand $L_{A,q}$ in a distributional sense, as an operator
$L_{A,q}:H^1(\Omega)\to H^{-1}(\Omega)$, given by
\[
    \langle L_{A, q} \phi,\psi \rangle := \int_{\Omega} \nabla \phi \cdot \nabla \psi + i
    A \cdot (\phi \nabla \psi - \psi \nabla \phi) + (A^2 + p)\phi \psi - F  \cdot \nabla(\phi \psi ) \, dx,
\]
where $ \phi \in H^1(\Omega) $ and $ \psi \in H^1_0(\Omega) $.

The inverse problem for the magnetic Schr\"odinger operator
we are about to consider comes from the Dirichlet Problem
\begin{align*}
    L_{A,q} u &= 0, \quad \text{in}\quad \Omega, \\
    u|_{\p\Omega} &= f,
\end{align*} 
where $f$ is in the Sobolev space $H^{1/2}(\p \Omega)$.  
The normal component of the magnetic gradient on the boundary, $(\p_n+i n\cdot A)u|_{\p \Omega}$,
here $n$ denotes the outward pointing unit normal vector on $\partial \Omega$, is
in our case defined, following \cite{KU}, as the bounded linear map 
$N_{A, q} : H^1(\Omega) \to H^{-1/2}(\p\Omega)$ given by
\[ 
    \langle N_{A,q} u, \varphi \rangle = \int_{\Omega} \nabla u \cdot \nabla \phi + i A
\cdot (u \nabla \phi - \phi \nabla u) + (A^2+p) u \phi  - F \cdot \nabla (u \phi) \, dx 
\]
for any $ u \in H^1(\Omega) $ such that $ L_{A, q}u = 0 $ and any $ \varphi \in
H^{1/2}(\p\Omega) $, such that $ \phi|_{\p \Omega} = \varphi $. 
The definition is independent of the choice of an extension
$\phi$ of $\varphi$.

We shall consider the more general notion of a Cauchy data set, instead of the DN-map 
when dealing with the magnetic Schr\"odinger equation.
The Cauchy data sets are the sets of boundary data of solutions, i.e.
\[
    C_{A,q}:=\{(u|_{\p \Omega},N_{A,q}u): u\in H^1(\Omega)\textrm{ and }  L_{A,q}u=0 \textrm{ in } \Omega\}.
\]

The magnetic field corresponding to a potential $A$ is given by the 2-form $dA$, which is 
defined as
\begin{align} \label{eq:magDef}
    dA=\sum_{1\le j<k\le n}(\p_{j}A_k-\p_{k}A_j)dx_j\wedge dx_k,
\end{align}
this definition should be understood in the sense of non-smooth differential forms 
(a.k.a. currents).

Our aim is now to reduce Theorem \ref{thm1} to the following Proposition, after which the
rest of the paper is devoted to proving this Proposition.

\begin{prop} \label{thm2} Let $\Omega \subset \R^n$ be a bounded domain with Lipschitz 
    boundary.
    Assume that $A_1,A_2,F_1,F_2 \in C^{0,\gamma}(\Omega,\R^n)$, $2/3 < \gamma \leq 1$, with
    $A_1=A_2$ and $F_1=F_2$ on $\p\Omega$, and let $p_1,p_2 \in L^\infty(\Omega,\C)$.
    Assume that $C_{A_1,q_1}=C_{A_2,q_2}$, then $dA_1 = dA_2$ and
    $\nabla \cdot F_1 + p_1 = \nabla \cdot F_2 + p_2$ in $\Omega$.
\end{prop}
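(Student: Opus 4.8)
The plan is to follow the complex geometric optics (CGO) strategy of Krupchyk--Uhlmann, adapted to the distributional electric potential $q = \nabla \cdot F + p$. First I would construct CGO solutions $u_j = e^{\rho_j \cdot x}(a_j + r_j)$, $j=1,2$, to $L_{A_1,q_1} u_1 = 0$ and to the formal transpose of $L_{A_2,q_2}$, where the complex frequencies $\rho_1, \rho_2 \in \C^n$ satisfy $\rho_j \cdot \rho_j = 0$ and are chosen so that $\rho_1 + \rho_2$ encodes a fixed real Fourier frequency $\xi$ while $|\rho_j| \to \infty$. The amplitudes $a_j$ would be WKB-type factors designed to cancel the leading transport terms carrying $A_j$, and the construction of the remainders $r_j$, together with quantitative decay after rescaling, is the content of the CGO section referenced in the excerpt.

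Given $C_{A_1,q_1} = C_{A_2,q_2}$ and the boundary conditions $A_1 = A_2$, $F_1 = F_2$ on $\p\Omega$ (which make the boundary contributions from integration by parts vanish), pairing a solution of one equation against a solution of the other yields the integral identity
\begin{align*}
0 = \int_\Omega \big[\, i(A_1 - A_2)\cdot(u_1\nabla u_2 - u_2\nabla u_1) + (A_1^2 - A_2^2 + p_1 - p_2)\,u_1 u_2 - (F_1 - F_2)\cdot\nabla(u_1 u_2) \,\big]\,dx .
\end{align*}
Substituting the CGO solutions, multiplying by the appropriate power of $|\rho|$ and letting $|\rho|\to\infty$, the dominant contribution comes from the term linear in $A_1 - A_2$ paired with the gradient of the phase; a stationary-type computation identifies this limit with a component of the Fourier transform of $d(A_1 - A_2)$. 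Varying the admissible directions of $\rho_1, \rho_2$ for each fixed $\xi$ forces $\widehat{d(A_1 - A_2)}(\xi) = 0$ for all $\xi$, hence $dA_1 = dA_2$. Since $A_1 - A_2 \in C^{0,\gamma}$ is then closed and vanishes on $\p\Omega$, I would write $A_1 - A_2 = \nabla\Phi$ with $\Phi|_{\p\Omega} = 0$ and gauge it away: the transformation $u \mapsto e^{i\Phi} u$ intertwines $L_{A_1,q_1}$ with $L_{A_2,q_1}$, leaves the electric potential unchanged, and preserves the Cauchy data, so we may assume $A_1 = A_2$.

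With $A_1 = A_2$ the surviving identity isolates the electric contribution. Passing again to the limit $|\rho|\to\infty$ shows that the Fourier transform of $(\nabla \cdot F_1 + p_1) - (\nabla \cdot F_2 + p_2)$ vanishes at every $\xi$, where the divergence terms are treated by moving $\nabla$ onto the exponential and amplitude factors via integration by parts (legitimate since $F_j \in C^{0,\gamma}$ agree on $\p\Omega$). This gives $\nabla \cdot F_1 + p_1 = \nabla \cdot F_2 + p_2$ in $\Omega$, completing the proof.

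The main obstacle I expect is controlling the remainder in the presence of the distributional potential $q = \nabla \cdot F + p$. Because $\nabla \cdot F$ need not be a function, the usual $L^\infty$-based estimates on the CGO remainders break down, and one must instead integrate the divergence by parts against the CGO solutions, trading a derivative for a factor of $|\rho|$ that has to be reabsorbed by the decay of $r_j$. This is precisely where the quantitative threshold $2/3 < \gamma \leq 1$ enters: it guarantees that the gain from the smoothing and Carleman estimates outpaces the loss of one derivative, so that the remainder genuinely vanishes in the limit. Bookkeeping of these competing powers of $|\rho|$, at the sharp regularity level and with the magnetic potentials assumed more regular than the electric one, is the crux of the argument.
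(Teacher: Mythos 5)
Your outline follows the paper's own route (CGO solutions built from mollified potentials, the integral identity, gauge reduction to $A_1=A_2$, Fourier inversion), but two steps as you describe them would not go through. For the magnetic part: after multiplying the identity by $h$ and letting $h\to 0$, the limit is not yet a component of the Fourier transform of $A_1-A_2$; it is
\begin{align*}
(\mu_1+i\mu_2)\cdot\int_{\R^n}(A_1-A_2)\,e^{ix\cdot\xi}\,e^{\Phi_1+\ov{\Phi_2}}\,dx=0,
\end{align*}
where $\Phi_1,\Phi_2$ solve the $\overline{\partial}$-type transport equations $\zeta_0\cdot\nabla\Phi_j+i\zeta_0\cdot A_j=0$ and therefore depend both on $A_1,A_2$ and on the chosen directions $\mu_1,\mu_2$. ``Varying the admissible directions'' cannot remove this weight, since the weight varies with the directions as well. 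Discarding $e^{\Phi_1+\ov{\Phi_2}}$ is a separate, nontrivial step --- the Eskin--Ralston/Sun argument, invoked in the paper as Proposition 3.3 of \cite{KU} --- and only after it does one obtain $\mu\cdot\widehat{A_1-A_2}(-\xi)=0$ for all $\mu\perp\xi$, hence $dA_1=dA_2$. Your sketch omits this entirely.

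For the electric part, the mechanism you propose --- integrate the divergence by parts onto the CGO factors, trade a derivative for $|\rho|\sim h^{-1}$, and reabsorb it by the decay of $r_j$ --- does not close numerically: with $\|r_j\|_{L^2}\lesssim h^{\gamma/2}$ and $\|\nabla r_j\|_{L^2}\lesssim h^{\gamma/2-1}$, the worst term $\int F\cdot\nabla(r_1\ov{r_2})\,dx$ is only $O(h^{\gamma-1})$, which diverges for $\gamma<1$. The device that actually closes the argument is a second mollification of $F$ at a scale different from the one used in the CGO construction: write $F=F^\sharp+(F-F^\sharp)$ with $F^\sharp=\Psi_\theta\ast F$, integrate by parts only on the smooth piece (cost $\theta^{1-\gamma}$ from $\|\nabla F^\sharp\|_{\infty}$), estimate the rough piece directly against $\nabla r_j$ (cost $\theta^{-\gamma}h^{\gamma/2-1}$), and choose $\theta=h^{-1}$ to get $O(h^{(3\gamma-2)/2})$ --- this is exactly where $2/3<\gamma$ enters. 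Your remark that ``smoothing outpaces the loss of one derivative'' points in the right direction, but without the two-parameter splitting the estimate fails. A smaller point: to arrange that the gauge function vanishes on the boundary one should first extend $A_j,F_j$ to a larger ball with $A_1=A_2$ outside $\Omega$, as the paper does; on $\Omega$ itself the condition $A_1=A_2$ on $\p\Omega$ only forces the primitive of the closed form $A_1-A_2$ to be locally constant on the boundary, not zero.
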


The above result is a variation of the main result in \cite{KU}. It differs from this
by being applicable to lower regularity electric potentials (i.e. of the special distributional form),
but it also by requires more regularity on the magnetic potentials.

Another more general  point concerning the above result is that, we cannot in general
hope to recover the magnetic potential $A$. This is because of the gauge invariance 
of the Cauchy data sets. If $\psi\in C^{1,\gamma}(\Omega)$  and $\psi|_{\p \Omega}=0$, then  
$C_{A,q}=C_{A+\nabla\psi,q}$, i.e. it 
is possible to change the magnetic potentials without disturbing the boundary data 
(see Proposition \ref{prop_gauge_1} in the appendix).

At several points we will need extensions of H\"older continuous functions to a larger set 
containing $\Omega$. The following basic extension result on H\"older  continuous functions 
will be used for this (see Theorem 3 on page 174 in \cite{St} and Theorem 16.11 
on page 342 in \cite{CDK}).

\begin{lem}\label{HoldExt} 
Let $\Omega \subset \R^n$ be open set with Lipschitz boundary.
Then there exists a continuous linear extension operator $E$,
\[ 
    E: C^{0,\gamma}(\Omega) \to C^{0,\gamma}_0(\R^n), 
\]
for $0\leq \gamma\leq1$. More precisley there exists a constant $C=C(\Omega)>0$,
such that for every $f \in C^{0,\gamma}(\Omega)$, $\supp(E(f))$ is compact,
\[ 
    E(f)|_\Omega = f 
\]
and one has the norm estimate
\[ 
    \| E(f) \|_{C^{0,\gamma}(\R^n)}  \leq C \| f \|_{C^{0,\gamma}(\Omega) }.
\]
\end{lem}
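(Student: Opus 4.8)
The plan is to construct the extension operator explicitly via a Whitney decomposition of the complement of $\ov{\Omega}$, which automatically yields a \emph{linear} operator, and then to truncate by a fixed smooth cutoff to achieve compact support. Since $\Omega$ is bounded, the set $K := \ov{\Omega}$ is compact, and every $f \in C^{0,\gamma}(\Omega)$ extends uniquely to a continuous function on $K$ with the same H\"older seminorm; I therefore work with functions defined on the closed set $K$. Because we only need to match function \emph{values} (the spaces involved are of zeroth order, $0 \le \gamma \le 1$), the classical Whitney extension in its simplest form suffices, and the Lipschitz regularity of $\p\Omega$ is not essential to the argument, serving only to keep the relevant geometric quantities comparable.

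First I would fix a Whitney decomposition $\{Q_k\}$ of the open set $\R^n \setminus K$ into closed dyadic cubes with pairwise disjoint interiors satisfying the standard comparability $\operatorname{diam}(Q_k) \le \operatorname{dist}(Q_k, K) \le 4\operatorname{diam}(Q_k)$. Slightly enlarging each $Q_k$ to $Q_k^*$ produces a locally finite cover of $\R^n \setminus K$, to which I associate a smooth partition of unity $\{\varphi_k\}$ with $0 \le \varphi_k \le 1$, $\sum_k \varphi_k \equiv 1$ on $\R^n \setminus K$, $\supp \varphi_k \subset Q_k^*$, and the gradient bound $|\nabla \varphi_k(x)| \le C\, \operatorname{diam}(Q_k)^{-1}$. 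For each $k$ I select a point $p_k \in K$ realizing $\operatorname{dist}(Q_k, K)$, and set
\[
    Ef(x) := \begin{cases} f(x), & x \in K, \\ \sum_k \varphi_k(x)\, f(p_k), & x \in \R^n \setminus K. \end{cases}
\]
Linearity of $E$ is immediate from this formula, and $Ef|_\Omega = f$ holds by construction.

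The core of the argument is the norm estimate. The bound $\|Ef\|_{L^\infty(\R^n)} \le \|f\|_{L^\infty(\Omega)}$ is immediate, since for $x \notin K$ the value $Ef(x)$ is a convex combination of the numbers $f(p_k)$. For the H\"older seminorm I estimate $|Ef(x) - Ef(y)|$ in three cases. When $x, y \in K$ the bound $|f|_{C^{0,\gamma}(\Omega)}|x-y|^\gamma$ is the hypothesis, and the mixed case $x \in K$, $y \notin K$ is reduced to the remaining one by inserting the nearest point of $K$ to $y$. The essential case is $x, y \in \R^n \setminus K$, where I write $Ef(x) - Ef(y) = \sum_k (\varphi_k(x) - \varphi_k(y)) f(p_k)$, subtract the constant $f(p_{k_0})$ attached to a cube $Q_{k_0}$ containing $x$ using $\sum_k (\varphi_k(x) - \varphi_k(y)) = 0$, and then bound the finitely many surviving terms via the gradient estimate on $\varphi_k$ together with $\operatorname{diam}(Q_k) \approx \operatorname{dist}(Q_k, K)$, which gives $|f(p_k) - f(p_{k_0})| \le C|f|_{C^{0,\gamma}(\Omega)} \operatorname{diam}(Q_{k_0})^\gamma$ for the overlapping cubes. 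Controlling the interplay between the gradient blow-up of the $\varphi_k$ near $\p\Omega$ and the H\"older modulus of $f$ is the main technical obstacle, and is precisely where the Whitney geometry is used.

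Finally, to obtain compact support, I fix $R$ with $K \subset B(0,R)$ and a cutoff $\chi \in C_c^\infty(\R^n)$ equal to $1$ on $B(0,R+1)$. Replacing $Ef$ by $\chi\, Ef$ leaves the restriction to $\Omega$ unchanged, produces a compactly supported extension lying in $C^{0,\gamma}_0(\R^n)$, and preserves the H\"older norm up to a constant, since multiplication by a fixed element of $C_c^\infty(\R^n)$ is bounded on $C^{0,\gamma}(\R^n)$. Collecting the constants yields $\|Ef\|_{C^{0,\gamma}(\R^n)} \le C(\Omega)\|f\|_{C^{0,\gamma}(\Omega)}$, as claimed.
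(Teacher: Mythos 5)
The paper gives no proof of this lemma at all---it simply cites Theorem 3 of Chapter VI in Stein and Theorem 16.11 in Csato--Dacorogna--Kneuss---and your Whitney-decomposition construction is precisely the classical argument behind the cited result, so you have in effect reproduced the paper's (outsourced) proof rather than found a different one. Your sketch is correct; the only point left implicit is the standard split of the case $x,y\notin K$ into $|x-y|\lesssim \operatorname{dist}(x,K)$ (where the telescoping/gradient bound you describe applies) and $|x-y|\gtrsim \operatorname{dist}(x,K)$ (where one instead passes through nearest points of $K$), without which the estimate $|f|_{C^{0,\gamma}}|x-y|\operatorname{diam}(Q_{k_0})^{\gamma-1}$ does not by itself yield $|x-y|^{\gamma}$.
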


We will also need the following boundary reconstruction result from \cite{Sa}
(see Theorem 1.9 in \cite{Sa}). 
\begin{thm} \label{bndry_rec}
Let $\Omega \subset \R^n$ be open set with Lipschitz boundary and $n\geq 3$. Assume 
$V_1,V_2\in C^{0,\gamma}(\Omega,\R^n)$, $0<\gamma \leq 1$. If $\Lambda_{V_1} = \Lambda_{V_2}$,
then $V_1|_{\p\Omega}=V_2|_{\p\Omega}$.
\end{thm}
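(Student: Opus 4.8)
The plan is to prove the sharper, purely local statement that $\Lambda_{V_1}=\Lambda_{V_2}$ forces $V_1(z)=V_2(z)$ at each fixed $z\in\p\Omega$, by probing the Dirichlet-to-Neumann pairing with highly oscillatory boundary data that concentrate at $z$. Fix $z\in\p\Omega$ and, using the Lipschitz regularity of the boundary, pass to boundary normal coordinates in which $z=0$, the interior unit normal is $\nu$, and $\Omega$ lies locally in $\{\nu\cdot x>0\}$. For a unit tangent vector $\mu$ at $z$ set $\rho=i\mu-\nu$; since $\mu\cdot\nu=0$ and $|\mu|=|\nu|=1$ one has $\rho\cdot\rho=-|\mu|^2-2i\mu\cdot\nu+|\nu|^2=0$, so $e^{\tau\rho\cdot x}$ is harmonic. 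Define the boundary-layer family $v_\tau(x)=\eta(x)e^{\tau\rho\cdot x}$ with $\eta$ a fixed smooth cutoff supported near $z$ and $\eta(z)=1$. Because $\Re(\rho\cdot x)=-\nu\cdot x$, the modulus $|v_\tau|$ concentrates in a layer of width $\sim 1/\tau$ along $\p\Omega$ while oscillating tangentially at frequency $\tau$, and one checks $\|v_\tau\|_{L^2(\Omega)}\sim c\,\tau^{-1/2}$, $\|\nabla v_\tau\|_{L^2(\Omega)}\sim c\,\tau^{1/2}$.

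Next I would exploit the hypothesis through an integral identity. Let $u_j\in H^1(\Omega)$ solve $L_{V_j}u_j=0$ with $u_j|_{\p\Omega}=v_\tau|_{\p\Omega}$ (well posed by Theorems 8.1 and 8.3 of \cite{GT}), and pair both Dirichlet-to-Neumann maps with $\varphi=\overline{v_\tau}|_{\p\Omega}$, using the extension $\phi=\overline{v_\tau}$. Subtracting and using $\Lambda_{V_1}=\Lambda_{V_2}$ gives
\[
0=\int_\Omega \nabla(u_1-u_2)\cdot\nabla\overline{v_\tau}\,dx+\int_\Omega\big(V_1\cdot\nabla u_1-V_2\cdot\nabla u_2\big)\overline{v_\tau}\,dx.
\]
Writing $u_j=v_\tau+r_j$ with $r_j|_{\p\Omega}=0$, the contributions of $v_\tau$ alone cancel because the two solutions share the same boundary data, and one is left to leading order with $\int_\Omega(V_1-V_2)\cdot\nabla v_\tau\,\overline{v_\tau}\,dx$ plus terms built from $r_1,r_2$. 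Using $\nabla v_\tau=\tau\rho\,v_\tau+O(\tau^{0})$, the relation $\rho+\overline{\rho}=-2\nu$, the estimate $\int_\Omega|v_\tau|^2\sim c\,\tau^{-1}$, and the continuity of $V_j$ (so $V_j(x)=V_j(z)+o(1)$ on the shrinking support), this leading integral converges as $\tau\to\infty$ to $\tfrac{c}{2}(V_1-V_2)(z)\cdot\rho$.

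Provided the remainder terms vanish, separating real and imaginary parts of $(V_1-V_2)(z)\cdot\rho=-(V_1-V_2)(z)\cdot\nu+i\,(V_1-V_2)(z)\cdot\mu$ recovers the normal component and the $\mu$-tangential component of $V_1-V_2$ at $z$; letting $\mu$ range over a basis of the tangent space then yields $(V_1-V_2)(z)=0$, and since $z$ is arbitrary, $V_1=V_2$ on $\p\Omega$. I expect the main obstacle to be exactly the justification that the $r_j$-contributions are $o(1)$. The energy bound $\|r_j\|_{H^1}\lesssim\|L_{V_j}v_\tau\|_{H^{-1}}$ combined with the crude estimate $\|L_{V_j}v_\tau\|_{H^{-1}}\lesssim\tau^{1/2}$ gives only $\|r_j\|_{H^1}\lesssim\tau^{1/2}$, so the term $\int_\Omega\nabla r_j\cdot\nabla\overline{v_\tau}\,dx$ is $O(\tau)$, too large by a full power of $\tau$ to be absorbed. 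Overcoming this is the crux: one must use the tangential oscillation of $v_\tau$, where non-stationary phase gains powers of $\tau$ upon testing against fixed $H^1$ functions, together with the H\"older continuity of $V_j$ to replace $V_j$ by the constant $V_j(z)$ up to an $O(|x-z|^\gamma)$ error, thereby improving the bound to $\|L_{V_j}v_\tau\|_{H^{-1}}=o(\tau^{-1/2})$ and forcing the remainder terms to disappear in the limit. The merely Lipschitz regularity of $\p\Omega$ is an additional technical complication, since the boundary-normal change of variables is non-smooth and its Jacobian must be controlled uniformly within the thin concentration layer.
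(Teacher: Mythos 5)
You have reconstructed the right family of argument: the paper does not actually prove this theorem but imports it from Salo's thesis (Theorem 1.9 in \cite{Sa}), and the proof there is indeed a Brown-type boundary determination argument with oscillating solutions $\eta\,e^{\tau(i\mu-\nu)\cdot x}$ concentrating at a fixed boundary point, whose leading-order pairing produces $(V_1-V_2)(z)\cdot(i\mu-\nu)$ and hence both the tangential and normal components. So the strategy, the choice of $\rho$ with $\rho\cdot\rho=0$, the $L^2$ scalings, and the way the limit identifies $V_1(z)-V_2(z)$ are all correct.

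However, the step you yourself flag as the crux is a genuine gap, and the patch you propose for it cannot work as stated. Because you test the DN-maps against $\phi=\overline{v_\tau}$, which is \emph{not} a solution of the adjoint equation $-\Delta\phi-\nabla\cdot(V_2\phi)=0$, the term $\int_\Omega\nabla(u_1-u_2)\cdot\nabla\overline{v_\tau}\,dx$ survives, and to make it $o(1)$ against $\|\nabla v_\tau\|_{L^2}\sim\tau^{1/2}$ you would need $\|r_j\|_{H^1}=o(\tau^{-1/2})$, i.e. $\|L_{V_j}v_\tau\|_{H^{-1}}=o(\tau^{-1/2})$. That bound is false: the single term $\tau\,V_j(z)\cdot\rho\,v_\tau$ already has $H^{-1}$-norm of order exactly $\tau^{-1/2}$ (this is sharp even after integrating by parts and using that test functions vanish on $\partial\Omega$), and no amount of non-stationary phase or H\"older continuity removes the constant part $V_j(z)$. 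The standard repair is structural rather than quantitative: take the second function to be an exact solution of the \emph{adjoint} problem with boundary value $\overline{v_\tau}|_{\partial\Omega}$, so that the difference identity collapses to $\int_\Omega(V_1-V_2)\cdot\nabla u_1\,u_2\,dx=0$ with no gradient--gradient term; then every error term carries the bounded factor $V_1-V_2$ and at most one remainder, and the requirement drops to $\|r_1\|_{H^1}=o(\tau^{1/2})$ and $\|r_2\|_{L^2}=o(\tau^{-1/2})$. Even these need the Hardy-inequality observation that an $H^1_0(\Omega)$ test function is $O(\operatorname{dist}(x,\partial\Omega))$ in the concentration layer, which is what actually gains the missing power of $\tau$ in estimating $\|L_{V}v_\tau\|_{H^{-1}}$, together with splitting $V=V(z)+(V-V(z))$ and using continuity on the second piece. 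Without these two ingredients the argument does not close, so as written the proposal establishes the scheme but not the theorem.
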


Next we show how Theorem \ref{thm1} follows from Proposition \ref{thm2}.
We follow the argument given in \cite{Sa}. The rest of the paper will focus on proving Proposition
\ref{thm2}.

\bigskip
\textit{Proof of Theorem \ref{thm1}.}
By Theorem \ref{bndry_rec} we know that $V_1=V_2$ on $\p \Omega$.
Lemma \ref{HoldExt}  allows us then to extend $V_j$ to a 
ball $B$, $\Omega \subset\subset B$ so that $V_j \in C^{0,\gamma}(B,\R^n)$, $V_j|_{\p B} = 0$
and $V_1=V_2$ on $B \setminus \Omega$.
Lemma \ref{lem_Cauchy_data_conv} below
shows that the  above extension does not 
alter the DN-maps, i.e. $\Lambda^B_{V_1}=\Lambda^B_{V_2}$.
We may thus assume that that $\Omega = B$ and that $V_1=V_2=0$ on $\p\Omega=\p B$.

We now consider the magnetic Schr\"odinger operators $L_{A_j,q_j}$, $j=1,2$ that 
coincide with $L_{V_j}$.  That is we choose
\[
    A_j := iV_j/2 \quad\text{and}\quad q_j := V_j^2/4 - \nabla \cdot V_j/2, 
\]
which gives that $L_{A_j,q_j} = L_{V_j}$.

Next we want to show that 
$C_{A_j,q_j} = \{ (f,\Lambda_{V_j}f) \,|\, f \in  H^{1/2}(\p B) \}$. We need only to 
show that $N_{A_j,q_j} u_j = \Lambda_{V_j} u_j$, $j=1,2$. 
Let $ u_j \in H^1(B) $ be such that $ L_{A_j, q_j}u_j = 0 $ and assume that $ \varphi \in
H^{1/2}(\p B) $ and that $\phi \in H^1(B)$ is an extension of $\varphi$, 
i.e. $ \phi|_{\p B} = \varphi$. Then  
by definition and because $V_j=0$ on $\p B$
\begin{align*}
    \langle N_{A_j,q_j} u_j, \varphi \rangle 
    &= \int_{B} (\nabla u_j \cdot \nabla \phi - \frac{1}{2}V_j
    \cdot (u_j \nabla \phi - \phi \nabla u_j) 
    + \frac{1}{2} V_j \cdot  \nabla (u_j \phi) )\, dx \\
    &= \int_{B} (\nabla u_j \cdot \nabla \phi  + V_j \cdot \nabla u_j \phi )\, dx \\
    &= \langle \Lambda_{V_j} u_j, \varphi \rangle.
\end{align*}
The assumption that $\Lambda_{V_1}=\Lambda_{V_2}$, implies therefore that
$C_{A_1,q_1} = C_{A_2,q_2}$.

We can now apply Proposition \ref{thm2},
which gives that $dV_1 = dV_2$. By the Poincar\'{e} Lemma (see Theorem 8.3 in \cite{CDK}), 
there exists an $\psi \in C^{1,\gamma}(B)$, s.t.  $V_1-V_2 = \nabla \psi$, 
since $\nabla \psi = 0$ outside $\supp(V_1) \cup \supp(V_2)$, we have that $\psi$ is constant
near $\p B$. We may hence add a constant to $\psi$, so that $\psi = 0$ near $\p B$.

The second consequence of Proposition \ref{thm2} is  that $q_1 = q_2$, so that
$V_1^2/2 - \nabla \cdot V_1 = V_2^2/2 - \nabla \cdot V_2$. This together with
the fact that $V_2 = \nabla \psi- V_1$, gives the equation
\begin{align} \label{eq:quasilin}
    \Delta \psi - V_1 \cdot \nabla \psi + \frac{1}{2} (\nabla \psi)^2 = 0 \text{ in } B,
\end{align}
Next we prove that $\psi \in C^2(B)$. Because of \eqref{eq:quasilin} we have that
$\psi \in C^0(\ov{B})$ satisfies 
\[
    \Delta \psi  = f \text{ in }  B,
\]
with $f = V_1 \cdot \nabla \psi - \frac{1}{2} (\nabla \psi)^2 \in C^{0,\gamma}(B)$.
By interior Schauder estimates (see Theorem 7.18 in \cite{Zw}) we know that 
$\psi \in C^{2,\gamma}(\ov{V})$, for every open $V \subset \subset B$.
It follows that $\psi \in C^2(B)$.

We may now apply the maximum principle to $\psi$ (see Theorem 10.1 in \cite{GT}).
From this it follows that $\psi = 0$ in $B$, since $\psi|_{\p B}=0$. 
We may thus conclude that $V_1=V_2$.
\begin{flushright} 
    $\Box$
\end{flushright}

\section{Complex geometric optics solutions and remainder estimates} \label{sec:cgo}

\noindent In this  section we shortly review the construction of complex geometric optics (CGO for 
short) solutions and then derive some remainder estimates related to these.
We follow by large the construction given in \cite{KU}. 
We are however dealing with more regular magnetic potentials, which allows us to get the better 
remainder estimates that are needed. This and the more irregular electric potentials require
us to make some modifications to the argument in \cite{KU}.

Smooth approximations of the potentials will be an important tool in the following.
Our smoothing procedure will consist of an extension followed by a convolution with a mollifier.
More specifically, given an $A \in C^{0,\gamma}(\Omega,\C^n)$, we consider 
an open bounded set $\Omega'$, s.t.  $\Omega \subset\subset \Omega'$. 
By Lemma \ref{HoldExt} there is an extension of $A$ to $\R^n$, 
$A'\in C^{0,\gamma}(\R^n,\C^n)$, s.t. $A=A'$ in $\Omega$, $A'|_{\R^n \setminus \Omega'} = 0$ and
\begin{align} \label{es:HoldContExt}
    \|A'\|_{C^{0,\gamma}(\R^n,\C^n)}  \leq C \| A \|_{C^{0,\gamma}(\Omega,\C^n)}.
\end{align}
Moreover let $ \Psi$ belong to $ C^\infty_0(\R^n) $ with $ 0 \leq \Psi(x) \leq 1 $ for
all $ x \in \R^n $, $ \supp \Psi \subset \{ x \in \R^n : |x| \leq 1 \} $ and $
\int_{\R^n} \Psi \, dx = 1 $. Define $\Psi_\theta (x) = \theta^n \Psi(\theta x)
$ for $\theta \in (0, \infty)$ and $ x \in \R^n $. We define  $A^\sharp$ for any 
$A' \in C_0^{0,\gamma}(\R^n,\C^n)$, as
\[
     A^\sharp := \Psi_\theta \ast A'.
\]
Notice also that \eqref{es:HoldContExt}  implies that
$\|A^\sharp\|_{C^{0,\gamma}(\R^n,\C^n)} \leq C \| A \|_{C^{0,\gamma}(\Omega,\C^n)}$,
where $C$ is independent of $\theta$.

The following Lemma gives some basic and well known estimates for the above approximation scheme 
(see \cite{H}).
\begin{lem}\label{LemApprox} 
Assume that $A \in C^{0,\gamma}(\Omega,\C^n)$, with $0<\gamma \leq 1$ 
and let $A'$ be the above extension of $A$  to $\R^n$.
Then
\begin{align}
    \| A'-A^\sharp \|_{L^\infty(\R^n,\C^n)}  &\leq C \theta^{-\gamma}, \label{es:Asharp1} \\
   \| \p^{\alpha} A^\sharp \|_{L^\infty(\R^n,\C^n)}  &\leq C \theta^{|\alpha|-\gamma}, \label{es:Asharp2} 
\end{align}
as $\theta \to \infty$, for any multi-index $\alpha$, with $|\alpha| \geq 1$.
\end{lem}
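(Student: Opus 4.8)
The plan is to establish the two estimates separately, both resting on the mollifier structure $A^\sharp = \Psi_\theta \ast A'$ together with the global H\"older bound \eqref{es:HoldContExt} on the extension $A'$.

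For \eqref{es:Asharp1}, the key point is that $\Psi_\theta$ integrates to $1$ and is supported in the ball of radius $1/\theta$. First I would write
\begin{align*}
    A'(x) - A^\sharp(x) = \int_{\R^n} \Psi_\theta(y) \big( A'(x) - A'(x-y) \big) \, dy,
\end{align*}
using $\int \Psi_\theta = 1$ to insert $A'(x)$ under the integral. Then I would bound $|A'(x) - A'(x-y)| \leq |A'|_{C^{0,\gamma}} |y|^\gamma$ by the global H\"older seminorm, and note that on $\supp \Psi_\theta$ one has $|y| \leq 1/\theta$, so $|y|^\gamma \leq \theta^{-\gamma}$. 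Since $\int \Psi_\theta = 1$ and $\Psi_\theta \geq 0$, this yields $\|A' - A^\sharp\|_{L^\infty} \leq C \theta^{-\gamma}$, with $C$ controlled by $\|A\|_{C^{0,\gamma}(\Omega)}$ via \eqref{es:HoldContExt}.

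For \eqref{es:Asharp2}, I would differentiate under the convolution, placing all derivatives on the mollifier: $\p^\alpha A^\sharp = (\p^\alpha \Psi_\theta) \ast A'$. By the scaling $\Psi_\theta(x) = \theta^n \Psi(\theta x)$ one computes $\p^\alpha \Psi_\theta(x) = \theta^{n+|\alpha|} (\p^\alpha \Psi)(\theta x)$, so that $\|\p^\alpha \Psi_\theta\|_{L^1(\R^n)} = \theta^{|\alpha|} \|\p^\alpha \Psi\|_{L^1(\R^n)}$ by the change of variables $z = \theta x$. Naively Young's inequality then gives only $\theta^{|\alpha|}$, which is off by $\theta^{-\gamma}$ from the claim. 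To recover the extra H\"older gain, the trick (for $|\alpha|\geq 1$) is to exploit that $\int \p^\alpha \Psi_\theta \, dx = 0$, which lets me subtract a constant: since $\p^\alpha \Psi_\theta$ has zero mean, I can write $\p^\alpha A^\sharp(x) = \int \p^\alpha \Psi_\theta(y)\big(A'(x-y) - A'(x)\big)\,dy$, and again estimate the difference by $|A'|_{C^{0,\gamma}}|y|^\gamma$ with $|y| \leq 1/\theta$ on the support. This produces the factor $\theta^{-\gamma}$, so that the bound becomes $\theta^{|\alpha|} \cdot \theta^{-\gamma} = \theta^{|\alpha|-\gamma}$ as asserted.

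The main obstacle, and the only nonroutine point, is precisely recognizing that the vanishing-moment property $\int \p^\alpha \Psi_\theta = 0$ (valid whenever $|\alpha| \geq 1$, by the fundamental theorem of calculus applied to the compactly supported $\Psi$) must be used to subtract off $A'(x)$ before estimating; without this cancellation one loses the H\"older factor and obtains only $\theta^{|\alpha|}$. All constants are uniform in $\theta$ because they depend only on $\|\p^\alpha \Psi\|_{L^1}$ and on $\|A\|_{C^{0,\gamma}(\Omega)}$ through \eqref{es:HoldContExt}; this uniformity is what makes the estimates useful later for the CGO remainder analysis.
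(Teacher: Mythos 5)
Your argument is correct and follows essentially the same route as the paper's proof: both estimates hinge on the normalization $\int \Psi_\theta = 1$ (for \eqref{es:Asharp1}) and the vanishing moment $\int \p^\alpha \Psi_\theta = 0$ for $|\alpha|\geq 1$ (for \eqref{es:Asharp2}), combined with the global H\"older bound on the extension $A'$ and the support condition $|y|\leq 1/\theta$. The only cosmetic difference is that the paper rescales $y \mapsto y/\theta$ inside the integral rather than bounding $|y|^\gamma$ directly on the support of $\Psi_\theta$, which is the same computation.
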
    
\begin{proof} Let $\Psi$ be as above. Assume that $x \in \R^n$.
    For the first estimate we use \eqref{es:HoldContExt} and have that 
\begin{align*}
    |A'(x) - A^\sharp(x) | &= 
   \big | \int_{\R^n} A'(x) \Psi(y)\,dy -  \int_{\R^n} A'(x-y) \theta^n \Psi(\theta y) \,dy\big| \\
   &\leq 
   \int_{\R^n} | A'(x) \Psi(y) - A'(x-y/\theta) \Psi(y)| \,dy \\
   &\leq C \| A \|_{C^{0,\gamma}(\Omega,\C^n)}
   \theta^{-\gamma} \int_{\R^n} |y|^{\gamma}  |\Psi(y)| \,dy \\
   &\leq 
   C \theta^{-\gamma}.
\end{align*}
To derive the second estimate \eqref{es:Asharp2} notice firstly that
\begin{align*}
    \int_{\R^n} \p^\alpha \Psi(y) dy = 0,
\end{align*}
for all multi indexes $\alpha$, with $|\alpha| \geq 1$. Let $x \in \R^n$, then
using the above observation, we have that
\begin{align*}
    |\p^\alpha A^\sharp(x)| 
    &= 
    \big| \int_{\R^n} A'(y)  \theta^{n+|\alpha|} (\p^\alpha \Psi) \big(\theta(x-y)\big) \,dy \big| \\
    &= 
    \big| \int_{\R^n} A'(x-y/\theta)  \theta^{|\alpha|} (\p^\alpha \Psi) (y) \,dy \big| \\
    &= 
    \big| \int_{\R^n} \big( A'(x-y/\theta) - A'(x) \big) \theta^{|\alpha|} (\p^\alpha \Psi) (y) \,dy \big| \\
    & \leq \| A \|_{C^{0,\gamma}(\Omega,\C^n)} \theta^{|\alpha|}
    \int_{\R^n} |y/\theta|^{\gamma} \big| (\p^\alpha \Psi) (y) \big| \,dy  \\
    & \leq C \theta^{|\alpha|-\gamma}.
\end{align*}
\end{proof}

\textbf{Remark.} In the rest of this section we will consider $A$ to be extended as $A'$ outside $\Omega$,
i.e. we use $A$ to denote the extension $A'$.

\bigskip
We will now show how to construct so called complex geometric optics solutions
following the argument in \cite{KU}. It is natural to formulate this 
in terms of certain  semiclassical norms that are defined as follows
\begin{align*}
&\|u\|_{H^1_{\textrm{scl}}(\Omega)}^2  := \|u\|_{L^2(\Omega)}^2+\|h\nabla u\|_{L^2(\Omega)}^2,\\
&\|v\|_{H^{-1}_{\textrm{scl}}(\Omega)} := \sup_{0\ne \psi\in
C_0^\infty(\Omega)}\frac{|\langle
v,\psi\rangle_{\Omega}|}{\|\psi\|_{H^1_{\textrm{scl}}(\Omega)}}.
\end{align*} 

The construction of CGO solutions is based
on the solvability result below. The solvability  result is in turn a consequence of 
a perturbed Carleman estimate, Proposition \ref{PCE} in the appendix.
The argument that shows how to obtain the
solvability result from the Carleman estimate is standard and we refer 
to the proof of Proposition 2.3 in \cite{KU}.

\begin{prop} \label{solvability} 
    Let $A,F \in L^\infty(\Omega, \C^n)$, $p \in L^\infty(\Omega,\C)$ and
    $q= \nabla \cdot F + p$. Furthermore let $\varphi(x) = \alpha \cdot x$, $\alpha \in \R^n$ with $|\alpha|=1$.
    If $h>0$ is small enough, then  for any $v \in H^{-1}(\Omega)$, there is a solution of the 
    equation
    \[
        e^{\varphi/h}h^2L_{A,q}(e^{-\varphi/h}u ) = v , \text{ in } \Omega,
    \]
    which satisfies 
    \begin{align} \label{eq:solv_est}
        \|u\|_{H^1_{\emph{scl}}(\Omega)}\le \frac{C}{h}\|v\|_{H^{-1}_{\emph{scl}}(\Omega)}.
    \end{align}
\end{prop}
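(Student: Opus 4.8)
The plan is to derive the solvability estimate \eqref{eq:solv_est} from the perturbed Carleman estimate, Proposition \ref{PCE}, by a standard duality/Hahn--Banach argument. The key observation is that the conjugated operator $e^{\varphi/h}h^2 L_{A,q}(e^{-\varphi/h}\,\cdot\,)$ has a formal transpose (or adjoint) obtained by flipping the sign of the Carleman weight, i.e. replacing $\varphi$ by $-\varphi$. Indeed, conjugation by $e^{\mp\varphi/h}$ sends the Laplacian to $e^{\pm\varphi/h}h^2(-\Delta)(e^{\mp\varphi/h}\,\cdot\,)$, and the first-order magnetic terms together with the distributional electric potential $q=\nabla\cdot F+p$ transform in a controlled way. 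Because Proposition \ref{PCE} (which I am assuming as stated later in the excerpt) provides an \emph{a priori} lower bound of the form $\|h\nabla w\|$ plus $\|w\|$ controlled by $\|e^{\varphi/h}h^2 L_{A,q}(e^{-\varphi/h}w)\|_{H^{-1}_{\textrm{scl}}}$ for test functions $w\in C_0^\infty(\Omega)$, the same estimate holds with $\varphi$ replaced by $-\varphi$, and this is exactly the coercivity needed for the transpose problem.

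Concretely, I would proceed as follows. First I would fix the weight $\varphi(x)=\alpha\cdot x$ and write $P_\varphi := e^{\varphi/h}h^2 L_{A,q}(e^{-\varphi/h}\,\cdot\,)$, noting that its transpose with respect to the $L^2$ pairing is $P_{-\varphi}^{t}$, of the same structural type. Second, I would invoke the Carleman estimate of Proposition \ref{PCE} for the weight $-\varphi$ to obtain, for all $w\in C_0^\infty(\Omega)$,
\begin{align*}
    \|w\|_{H^1_{\textrm{scl}}(\Omega)} \leq \frac{C}{h}\,\|P_{-\varphi}^{t} w\|_{H^{-1}_{\textrm{scl}}(\Omega)},
\end{align*}
valid for $h>0$ small. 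Third, on the subspace $\{P_{-\varphi}^{t}w : w\in C_0^\infty(\Omega)\}\subset H^{-1}_{\textrm{scl}}(\Omega)$ I would define the linear functional
\begin{align*}
    L(P_{-\varphi}^{t}w) := \langle v, w\rangle_\Omega,
\end{align*}
which is well defined because the Carleman estimate forces $P_{-\varphi}^{t}w=0$ to imply $w=0$; the same estimate shows $|L(P_{-\varphi}^{t}w)|\leq \|v\|_{H^{-1}_{\textrm{scl}}}\|w\|_{H^1_{\textrm{scl}}}\leq \frac{C}{h}\|v\|_{H^{-1}_{\textrm{scl}}}\|P_{-\varphi}^{t}w\|_{H^{-1}_{\textrm{scl}}}$, so $L$ is bounded with norm at most $\frac{C}{h}\|v\|_{H^{-1}_{\textrm{scl}}}$. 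By the Hahn--Banach theorem I would extend $L$ to all of $H^{-1}_{\textrm{scl}}(\Omega)$ preserving the norm, and then by the Riesz-type identification of the dual pairing realize the extension as pairing against some $u\in H^1_{\textrm{scl}}(\Omega)$ with $\|u\|_{H^1_{\textrm{scl}}}\leq \frac{C}{h}\|v\|_{H^{-1}_{\textrm{scl}}}$. Unwinding the definition of $L$ then gives $\langle P_\varphi u,w\rangle = \langle v,w\rangle$ for every $w\in C_0^\infty(\Omega)$, i.e. $P_\varphi u = v$ in $\Omega$, which is precisely the asserted solvability together with the bound \eqref{eq:solv_est}.

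The main obstacle I anticipate is purely bookkeeping rather than conceptual: one must verify that the transpose operator $P_{-\varphi}^{t}$ really does fall under the hypotheses of the Carleman estimate in Proposition \ref{PCE}, paying attention to how the distributional term $\nabla\cdot F$ and the first-order magnetic terms transpose, and to the precise placement of the semiclassical weights. Since $A,F\in L^\infty$ and $p\in L^\infty$, these lower-order perturbations are absorbed by the large-parameter gain in the unperturbed Carleman estimate exactly when $h$ is small, which is why the smallness of $h$ appears in the statement. Because the excerpt explicitly states that \emph{``the argument that shows how to obtain the solvability result from the Carleman estimate is standard and we refer to the proof of Proposition 2.3 in \cite{KU},''} the cleanest presentation is simply to record the duality scheme above and defer the routine verification of the transpose structure and the absorption of lower-order terms to that reference.
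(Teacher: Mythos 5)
Your proposal is correct and is essentially the same argument the paper intends: the paper gives no written proof of Proposition \ref{solvability} but explicitly defers to the standard duality/Hahn--Banach scheme of Proposition 2.3 in \cite{KU}, deriving solvability from the Carleman estimate of Proposition \ref{PCE} applied to the transposed operator with the weight $-\varphi$ (which is again linear with a unit direction vector, so the estimate applies). Your outline, including the definition of the functional on the range of $P_{-\varphi}^{t}$, the norm bound $C/h$, and the identification of the Hahn--Banach extension with an element of $H^1_{\textrm{scl}}(\Omega)$, matches that standard route.
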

\bigskip

The CGO solutions $u\in H^1(\Omega)$ considered here solve
\[ L_{A, q} u = 0, \]
with $ A ,F \in C^{0,\gamma}(\Omega, \C^n) $, $0<\gamma \leq1$, $p \in L^\infty(\Omega,\C)$ 
and have the form
\begin{equation} \label{eq:CGOform}
u(x;\zeta,h) = e^{x\cdot\zeta/h} (a(x;\zeta,h) + r(x;\zeta,h)),
\end{equation}
where $\zeta\in\C^n$ with $\zeta\cdot\zeta=0$ and $|\zeta|\sim 1$; $ h $ is a
small semiclassical parameter; $a$ is a smooth amplitude and $r$ is a reminder
term. 
%

We begin by assuming that $\zeta \in \C^n$, $\zeta = \zeta_0 + \zeta_1 $ is such that
\begin{align} \label{eq:zetaAssum}
 &\zeta\cdot\zeta=0,\;
    \zeta_0 \text{ is constant with respect to $h$, } \zeta_1=\mathcal{O}(h),\\  
    &\text{as } h\to 0 \text{ and }|\Re\zeta_0| =|\Im \zeta_0|=1. \nonumber
\end{align}
Abbreviate the conjugated operator multiplied by $h^2$, with 
\[
L_\zeta:= e^{- \zeta \cdot x / h} h^2 L_{A,q} ( e^{\zeta \cdot x / h}).
\]
Then in order to construct $ u(\cdot; \zeta, h) $ of the
form \eqref{eq:CGOform}, it is enough to prove the existence of a
$ r(\cdot; \zeta, h) \in H^1(\Omega) $ solving
\begin{equation}\label{eq:remainder}
L_\zeta r = -L_\zeta a,
\end{equation}
in $\Omega$ for a suitable $a$.
The $ a \in C^\infty(\R^n) $ is picked as the solution to 
\begin{equation}\label{eq:transport}
    \zeta_0 \cdot \nabla a + i \zeta_0 \cdot A^\sharp a = 0, \quad \text{in} \quad \R^n,
\end{equation}
so that left hand side of \eqref{eq:remainder} becomes, using \eqref{eq:zetaAssum}, \eqref{eq:transport}
and \eqref{eq:conjMaMf} given below,
\begin{align} \label{eq:aparts}
-L_\zeta a= & h^2\Delta a + i h^2 A \cdot \nabla a - h^2 m_A (a) - h^2 (A^2+p) a + 2h \zeta_1 \cdot \nabla a \\
    & + 2hi\zeta_0 \cdot(A-A^\sharp)a
    + 2 hi \zeta_1 \cdot A a - h^2 m_{\nabla \cdot F} (a). \nonumber
\end{align}
Here $m_A $ and $m_{\nabla \cdot F}$ are the bounded linear operators from $ H^1(\Omega) $ to $
H^{-1}(\Omega) $ defined by
\begin{align*}
    \langle m_A (\phi), \psi \rangle &:= \int_{\Omega} i \phi A \cdot \nabla \psi \, dx, \\
    \langle m_{\nabla \cdot F} (\phi), \psi \rangle &:=  -\int_{\Omega} F \cdot \nabla (\phi\psi) \, dx,
\end{align*}
for all $ \phi \in H^1(\Omega) $ and all $ \psi \in H^1_0(\Omega)$.
It easy to see that
\begin{align} \label{eq:conjMaMf}
   e^{-\zeta\cdot x/h}\circ h^2m_A \circ e^{\zeta\cdot x/h} &= -hi\zeta\cdot A+h^2 m_A, \\
   e^{-\zeta\cdot x/h}\circ h^2m_{\nabla \cdot F} \circ e^{\zeta\cdot x/h} &= h^2 m_{\nabla \cdot F}.
   \nonumber 
\end{align}

If we look for solutions to \eqref{eq:transport} in the form
$a = e^{\Phi^\sharp}$,
it will be enough that $ \Phi^\sharp(\cdot; \zeta_0, \theta) $ satisfies
\begin{equation}
\zeta_0 \cdot \nabla \Phi^\sharp + i \zeta_0 \cdot A^\sharp = 0 \label{eq:deltabarSHARP}
\end{equation}
in $ \R^n $. The fact that $ \mathrm{Re}\, \zeta_0 \cdot \mathrm{Im}\, \zeta_0 = 0 $
and $ |\mathrm{Re}\, \zeta_0| = |\mathrm{Im}\, \zeta_0| = 1 $, implies that
$N_{\zeta_0}:= \zeta_0 \cdot \nabla $ 
is a $\overline{\partial}-$operator in suitable coordinates.
The Cauchy operator $N_{\zeta_0}^{-1}$, defined by 
\[
    (N_{\zeta_0}^{-1}f)(x) := \frac{1}{2\pi} \int_{\R^2} 
    \frac{f(x-y_1 \Re \zeta_0 - y_2 \Im \zeta_0)}{y_1+iy_2}\, dy_1 dy_2, 
\]
for $f \in C_0(\R^n),$ is the inverse of the $\overline{\partial}-$operator
 and gives thus that
\[
\Phi^\sharp = N_{\zeta_0}^{-1} (- i \zeta_0 \cdot A^\sharp) \in C^\infty (\R^n).
\]
We will also use the following basic continuity result for the
Cauchy operator (see \cite{Sa}, Lemma 7.4).

\begin{lem} \label{Cauchy_op}
Let $f\in W^{k,\infty}(\R^n)$, $k \geq 0$, with $\supp(f) \subset B(0,R)$. 
Then  we have that
\begin{align}
\label{eq:Cauchy_op_cont}
\|N_{\zeta_0}^{-1} f \|_{W^{k,\infty}(\R^n)}\le C\|f\|_{W^{k,\infty}(\R^n)}, 
\end{align}
where $C=C(R)$.  
\end{lem}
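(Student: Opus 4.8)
The plan is to exploit the fact that, up to an orthogonal change of variables, $N_{\zeta_0}^{-1}$ is a convolution with the Cauchy kernel acting only in a two-dimensional plane, and that the compact support of $f$ compensates for the non-integrability of this kernel at infinity. First I would complete the pair $\{\Re\zeta_0,\Im\zeta_0\}$, which is orthonormal by the normalization in \eqref{eq:zetaAssum}, to an orthonormal basis $e_1=\Re\zeta_0,\ e_2=\Im\zeta_0,\ e_3,\dots,e_n$ of $\R^n$, and write $x=(x',x'')$ with $x'=(x_1,x_2)$ the coordinates in the $\{e_1,e_2\}$-plane. In these coordinates the definition of $N_{\zeta_0}^{-1}$ becomes, for fixed $x''$, a genuine two-dimensional convolution
\[
(N_{\zeta_0}^{-1}f)(x) = (K\ast_{x'} f)(x',x''), \qquad K(y')=\frac{1}{2\pi(y_1+iy_2)},
\]
taken in the $x'$ variable only, with $x''$ carried as a parameter. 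Since $|x_1 e_1+x_2 e_2 + x''|^2 = |x'|^2+|x''|^2$, the slice $x'\mapsto f(x',x'')$ is supported in the disk $\{|x'|\le R\}$ (and vanishes identically once $|x''|>R$), and this is the feature that makes the estimate close.

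For the base case $k=0$ I would estimate pointwise
\[
|(N_{\zeta_0}^{-1}f)(x)| \le \|f\|_{L^\infty(\R^n)}\,\frac{1}{2\pi}\int_{|z'|\le R}\frac{dz'}{|x'-z'|}.
\]
The obstacle here is that $|y'|^{-1}$ is not integrable over all of $\R^2$, so Young's inequality is not directly available; this is precisely where the support radius $R$ enters. Splitting the disk into the part with $|x'-z'|\le 2R$, where $\int |x'-z'|^{-1}\,dz'\le 4\pi R$ by polar coordinates centered at $x'$, and the part with $|x'-z'|>2R$, where the integrand is at most $1/(2R)$ over a set of area at most $\pi R^2$, yields the uniform bound $\int_{|z'|\le R}|x'-z'|^{-1}\,dz'\le CR$ with $C$ absolute. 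Hence $\|N_{\zeta_0}^{-1}f\|_{L^\infty(\R^n)}\le C R\,\|f\|_{L^\infty(\R^n)}$, which is \eqref{eq:Cauchy_op_cont} for $k=0$.

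To reach general $k$ I would commute derivatives through the convolution. Derivatives $\partial_{x_j}$ with $j\ge 3$ act only on the parameter $x''$ and pass directly under the integral sign, while the two planar derivatives $\partial_{x_1},\partial_{x_2}$ are transferred onto $f$ by the convolution identity $\partial_{x_i}(K\ast_{x'} f)=K\ast_{x'}(\partial_{x_i}f)$, valid since $K\in L^1_{\mathrm{loc}}(\R^2)$ and $f$ is compactly supported. One first checks this for smooth $f$ via difference quotients and dominated convergence, then removes the smoothness by mollification, enlarging the support to $B(0,2R)$ if necessary and passing to the limit, which only changes the value of $C(R)$. Iterating, every derivative $\partial^\alpha$ of order $|\alpha|\le k$ in these rotated coordinates satisfies $\partial^\alpha N_{\zeta_0}^{-1}f = N_{\zeta_0}^{-1}(\partial^\alpha f)$, so the $k=0$ estimate applied to $\partial^\alpha f$ controls every derivative. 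Finally, because the change of variables is orthogonal and fixed, the rotated and original $W^{k,\infty}$ norms are equivalent up to a constant depending only on $k$ and $n$, and summing over $|\alpha|\le k$ yields \eqref{eq:Cauchy_op_cont} with $C=C(R)$. I expect the only genuinely delicate point to be the careful use of the support hypothesis to tame the far-field behaviour of the Cauchy kernel in the $k=0$ estimate; the passage to higher $k$ is then essentially bookkeeping carried by the convolution structure.
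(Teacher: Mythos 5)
Your argument is correct. The paper itself gives no proof of this lemma --- it simply cites Lemma 7.4 of \cite{Sa} --- and your proof (reduction to a two-dimensional convolution with the Cauchy kernel in the plane spanned by the orthonormal pair $\Re\zeta_0,\Im\zeta_0$, the near/far splitting of $\int_{|z'|\le R}|x'-z'|^{-1}\,dz'$ to exploit the compact support, and commuting $\partial^\alpha$ with the convolution) is precisely the standard argument behind that cited result, so there is nothing to flag beyond the minor point that the orthogonality $\Re\zeta_0\cdot\Im\zeta_0=0$ is used in the paper but is not literally written in \eqref{eq:zetaAssum}.
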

\bigskip

Using now Lemma \ref{LemApprox} and Lemma \ref{Cauchy_op}, we have that 
\begin{align} \label{es:phi_sharp}
\| \p^\alpha \Phi^\sharp\|_{L^\infty(\R^n)} \leq C \theta^{|\alpha|-\gamma}
\end{align}
for $ \theta \in (1, \infty) $ and a multi-indexes $\alpha$, $|\alpha| \geq 1$.
Moreover, defining $ \Phi (\cdot; \zeta_0) := (\zeta_0 \cdot
\nabla)^{-1} (- i \zeta_0 \cdot A) \in L^\infty (\R^n) $,
solves analogously
\begin{equation}
\zeta_0 \cdot \nabla \Phi + i \zeta_0 \cdot A = 0 \label{eq:deltabar}
\end{equation}
and  satisfies
\begin{align}
    \| \Phi (\cdot; \zeta_0)\|_{L^\infty(\R^n)} \leq C \|A\|_{L^\infty(\R^n)}. \label{es:phi} 
\end{align}
Lemma \ref{Cauchy_op} and estimate \eqref{es:Asharp1}
imply that the functions $\Phi^\sharp$ converge to $\Phi$ in $L^\infty(\Omega)$ or more
explicitly that
\begin{align*}
 \big \| \Phi^\sharp (\cdot, \zeta_0, \theta) - \Phi (\cdot; \zeta_0) \big \|_{L^\infty(\R^n)} 
 \leq C \theta^{-\gamma}. 
\end{align*}

With the $a$ at hand the solvability result, Proposition \ref{solvability} 
guarantees the existence of a solution $r$, to equation \eqref{eq:remainder}, such 
that
\begin{align}\label{es:remNorm}
    \|r\|_{H^1_{\textrm{scl}}(\Omega)}\le \frac{C}{h}\| L_\zeta a \|_{H^{-1}_{\textrm{scl}}(\Omega)}.
\end{align}
Now we determine how the left hand side of the above estimate depends on $h$,
i.e. we estimate the $H^{-1}_{\textrm{scl}}(\Omega)$-norm of the terms in equation \eqref{eq:aparts}.
This gives us the behaviour of the $H^1_{\emph{scl}}(\Omega)$-norm of the remainder term
$r$ in the parameter $h$. 

Let $0\ne \psi\in C_0^\infty(\Omega)$. Then using \eqref{es:phi_sharp}, the fact that
$\zeta_1=\mathcal{O}(h)$ and the Cauchy--Schwarz inequality we get that
\begin{align*}
&|\langle h^2\Delta a, \psi \rangle_\Omega|\le \mathcal{O} (h^2\theta^{2-\gamma})
    \|\psi\|_{L^2(\Omega)}\le \mathcal{O} (h^2\theta^{2-\gamma})
    \|\psi\|_{H^{1}_{\textrm{scl}}(\Omega)}, \\
    &|\langle ih^2 A\cdot \nabla a,\psi\rangle_\Omega|\le \mathcal{O} (h^2\theta^{1-\gamma})
    \|\psi\|_{H^{1}_{\textrm{scl}}(\Omega)},\\
& |\langle 2h\zeta_1\cdot \nabla a,\psi\rangle_\Omega|\le \mathcal{O} (h^2\theta^{1-\gamma})
    \|\psi\|_{H^{1}_{\textrm{scl}}(\Omega)},\\
& |\langle 2hi\zeta_1\cdot Aa,\psi \rangle_\Omega|\le \mathcal{O} (h^2)
    \|\psi\|_{H^{1}_{\textrm{scl}}(\Omega)},\\
& |\langle h^2 (A^2+p) a,\psi\rangle_\Omega|\le \mathcal{O} (h^2)
    \|\psi\|_{H^{1}_{\textrm{scl}}(\Omega)}.
\end{align*}
By Lemma \ref{LemApprox} we have on the other hand that
\begin{align*}
|\langle 2hi\zeta_0 \cdot(A-A^\sharp)a,\psi\rangle_\Omega |&\le
    \mathcal{O}(h)\|a\|_{L^\infty(\Omega)}\|A-A^\sharp\|_{L^2(\Omega)}\|\psi\|_{L^2(\Omega)}\\
    &\le \mathcal{O}(h)\theta^{-\gamma} \|\psi\|_{H^1_{\textrm{scl}}(\Omega)}.
\end{align*}
Again by  Lemma \ref{LemApprox}  and estimate  \eqref{es:phi_sharp} we have that
\begin{align*}
|\langle h^2m_A(a),& \psi \rangle_\Omega|\le \bigg| \int_\Omega ih^2 A^\sharp
    a\cdot \nabla \psi dx\bigg| + \bigg|\int_\Omega i h^2 (A-A^\sharp) a\cdot \nabla \psi
    dx\bigg|\\
&\le \bigg| \int_\Omega ih^2 (\nabla \cdot (A^\sharp a)) \psi dx\bigg|
    +\mathcal{O}(h)\|A-A^\sharp\|_{L^2(\Omega)}\|h\nabla\psi\|_{L^2(\Omega)}\\
    &\le (\mathcal{O}(h^2\theta^{1-\gamma}) +\mathcal{O}(h)\theta^{-\gamma}) 
    \|\psi\|_{H^1_{\textrm{scl}}(\Omega)}. 
\end{align*}
Similarly with the help of Lemma \ref{LemApprox}  and estimate \eqref{es:phi_sharp} we have that
\begin{align*}
    |\langle h^2m_{\nabla\cdot F}(a),& \psi \rangle_\Omega|
    \le \bigg| \int_\Omega ih^2 F^\sharp \cdot \nabla (a\psi) dx\bigg| 
    + \bigg|\int_\Omega i h^2 (F-F^\sharp) \cdot \nabla (a \psi) dx\bigg|\\
    &\le \bigg| \int_\Omega ih^2 \nabla F^\sharp \cdot a\psi dx\bigg| 
    + \bigg|\int_\Omega i h^2 (F-F^\sharp) \cdot \nabla (a \psi) dx\bigg|\\
    &\le  Ch^2 \theta^{1-\gamma}\|\psi\|_{H^{1}_{\textrm{scl}}(\Omega)}
    +C h^2\|F-F^\sharp\|_{L^2(\Omega)} \|\nabla a\|_{L^\infty(\Omega)} \|\psi\|_{L^2(\Omega)}\\
    &\quad\quad\quad\quad\quad\quad\quad\quad\quad\quad
    +C h\|F-F^\sharp\|_{L^2(\Omega)} \|a\|_{L^\infty(\Omega)} \|h\nabla\psi\|_{L^2(\Omega)}\\
    &\le C(h^2\theta^{1-\gamma}+h^2\theta^{1-2\gamma}+h\theta^{-\gamma})
    \|\psi\|_{H^1_{\textrm{scl}}(\Omega)}. 
\end{align*}
Combining the above estimates gives that
\begin{align*}
    \| L_\zeta a \|_{H^{-1}_{\emph{scl}}(\Omega)} \le C (h^2\theta^{2-\gamma} + h \theta^{-\gamma})
\end{align*}
By choosing $\theta = h^{-1/2}$, we get hence by estimate \eqref{es:remNorm} that
\begin{align*} 
    \|r\|_{H^1_{\emph{scl}}(\Omega)}\le  C h^{\gamma/2}
\end{align*}
We have thus derived the following  Proposition.
\begin{prop} \label{CGOest}
Let $\Omega\subset \R^n$, $n\ge 3$, be a bounded open set with Lipschitz boundary.  
Let $A, F \in C^{0,\gamma}(\Omega,\R^n)$, $0 < \gamma \leq 1$,
$p \in L^\infty(\Omega,\C^n)$, with $q:= \nabla \cdot F + p$
and let $\zeta\in \C^n$ satisfy \eqref{eq:zetaAssum}. 
Then for all $h>0$ small enough, there
exists a solution $u(x,\zeta;h)\in H^1(\Omega)$ of
\[L_{A,q}u=0, \text{ in } \Omega \]
of the form
$u(x,\zeta;h)=e^{x\cdot\zeta/h}(e^{\Phi^\sharp(x,\zeta_0;h)}+r(x,\zeta;h))$.
The function  $\Phi^\sharp (\cdot,\zeta_0;h)\in C^\infty(\R^n) \cap L^\infty(\R^n) $ satisfies 
\begin{align} \label{es:phi_h_est}
    \|\p^\alpha \Phi^\sharp\|_{L^\infty(\R^n)}\le C_\alpha h^\frac{\gamma-|\alpha|}{2},
\end{align}
for all $\alpha$, $|\alpha|\ge 1$, and $\Phi^\sharp (\cdot,\zeta_0;h)$
converges in the $L^\infty$-norm  to $\Phi(\cdot,\zeta_0):=N_{\zeta_0}^{-1}(-i\zeta_0\cdot A)\in
L^\infty(\R^n)$. More precisely 
\begin{align} \label{es:PhiPhiSharp}
 \big \| \Phi^\sharp (\cdot, \zeta_0, h) - \Phi (\cdot; \zeta_0) \big \|_{L^\infty(\R^n)} 
 \leq C h^{\gamma/2}. 
\end{align}
The remainder $r$ is such that
\begin{align} \label{es:r_h_est}
    \|r\|_{H^1_{\emph{scl}}(\Omega)}\le  C h^{\gamma/2},
\end{align}
as $h\to 0$. 
\end{prop}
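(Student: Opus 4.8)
The plan is to build $u$ in the CGO form $u = e^{x\cdot\zeta/h}(a+r)$ and, as already reduced in \eqref{eq:remainder}, to produce the remainder $r\in H^1(\Omega)$ as a solution of $L_\zeta r = -L_\zeta a$ for a well-chosen smooth amplitude $a$. I would take $a = e^{\Phi^\sharp}$ with $\Phi^\sharp := N_{\zeta_0}^{-1}(-i\zeta_0\cdot A^\sharp)$, which solves the transport equation \eqref{eq:deltabarSHARP}; this is exactly what cancels the leading-order term in the conjugated operator, so that $-L_\zeta a$ reduces to the explicit collection of lower-order terms in \eqref{eq:aparts}. With such an $a$ in hand, the solvability result Proposition \ref{solvability} immediately furnishes an $r$ with $\|r\|_{H^1_{\textrm{scl}}(\Omega)}\le Ch^{-1}\|L_\zeta a\|_{H^{-1}_{\textrm{scl}}(\Omega)}$, and since $a\in C^\infty(\R^n)$ and $r\in H^1(\Omega)$, the resulting $u$ lies in $H^1(\Omega)$ and solves $L_{A,q}u=0$.

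The amplitude estimates I would derive first. Since $N_{\zeta_0}=\zeta_0\cdot\nabla$ is a $\overline\partial$-operator in suitable real coordinates, $\Phi^\sharp$ is well defined and smooth. For \eqref{es:phi_h_est} I would combine the derivative bound \eqref{es:Asharp2} of Lemma \ref{LemApprox} with the $W^{k,\infty}$-continuity of the Cauchy operator (Lemma \ref{Cauchy_op}) to get $\|\p^\alpha\Phi^\sharp\|_{L^\infty}\le C\theta^{|\alpha|-\gamma}$ for $|\alpha|\ge 1$, and then set $\theta=h^{-1/2}$. For the convergence \eqref{es:PhiPhiSharp} I would note that $\Phi-\Phi^\sharp = N_{\zeta_0}^{-1}(-i\zeta_0\cdot(A-A^\sharp))$, so Lemma \ref{Cauchy_op} together with the approximation estimate \eqref{es:Asharp1} gives $\|\Phi^\sharp-\Phi\|_{L^\infty}\le C\|A-A^\sharp\|_{L^\infty}\le C\theta^{-\gamma}=Ch^{\gamma/2}$.

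The heart of the proof is estimating $\|L_\zeta a\|_{H^{-1}_{\textrm{scl}}(\Omega)}$ term by term over the eight contributions in \eqref{eq:aparts}. Pairing each term against an arbitrary $0\ne\psi\in C_0^\infty(\Omega)$, the terms carrying explicit powers of $h$ together with bounded or amplitude factors are handled directly by Cauchy--Schwarz, $\zeta_1=\mathcal O(h)$, and the bounds \eqref{es:phi_h_est} on $\p^\alpha\Phi^\sharp$; these contribute $\mathcal O(h^2\theta^{2-\gamma})$ and smaller. The delicate terms are $h^2m_A(a)$ and $h^2m_{\nabla\cdot F}(a)$: here I would integrate by parts to move one derivative off $\psi$ onto the smoothed potential, splitting each into a smoothed part controlled by $\|\p A^\sharp\|_{L^\infty}\sim\theta^{1-\gamma}$ (resp. $\|\p F^\sharp\|_{L^\infty}$) and an approximation-error part controlled by $\|A-A^\sharp\|_{L^\infty}\sim\theta^{-\gamma}$ via Lemma \ref{LemApprox}. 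Summing everything yields $\|L_\zeta a\|_{H^{-1}_{\textrm{scl}}(\Omega)}\le C(h^2\theta^{2-\gamma}+h\theta^{-\gamma})$, and the choice $\theta=h^{-1/2}$ balances the two competing errors, making each $\mathcal O(h^{1+\gamma/2})$; dividing by $h$ through \eqref{es:remNorm} gives \eqref{es:r_h_est}.

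The main obstacle I anticipate is precisely this last balancing: the smoothing introduces a tension between derivative growth $\theta^{1-\gamma}$ and approximation error $\theta^{-\gamma}$, and the whole estimate is viable only because $\theta=h^{-1/2}$ turns both into the same power $h^{1+\gamma/2}$, which survives the $h^{-1}$ loss from solvability. The most irregular contribution is the divergence-form electric term $m_{\nabla\cdot F}$, whose integration by parts produces an additional $h^2\theta^{1-2\gamma}=h^{3/2+\gamma}$; this remains strictly subleading for every $\gamma>0$, so no lower bound on $\gamma$ beyond positivity is needed at this stage.
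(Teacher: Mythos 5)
Your proposal is correct and follows essentially the same route as the paper: the amplitude $a=e^{\Phi^\sharp}$ with $\Phi^\sharp=N_{\zeta_0}^{-1}(-i\zeta_0\cdot A^\sharp)$, the bounds \eqref{es:phi_sharp} and \eqref{es:PhiPhiSharp} via Lemmas \ref{LemApprox} and \ref{Cauchy_op}, the term-by-term $H^{-1}_{\textrm{scl}}$ estimate of \eqref{eq:aparts} with integration by parts on $m_A$ and $m_{\nabla\cdot F}$, and the balancing choice $\theta=h^{-1/2}$ yielding $\|L_\zeta a\|_{H^{-1}_{\textrm{scl}}(\Omega)}\le C(h^2\theta^{2-\gamma}+h\theta^{-\gamma})=\mathcal{O}(h^{1+\gamma/2})$ before invoking Proposition \ref{solvability}. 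Your closing remark that the $h^2\theta^{1-2\gamma}$ contribution from $m_{\nabla\cdot F}$ is subleading for every $\gamma>0$ matches the paper's observation that no restriction beyond $0<\gamma\le 1$ is needed at this stage.
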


\section{Uniqueness of the magnetic field} \label{sec:magUniq}

\noindent
This section contains a proof of the first part of Proposition \ref{thm2}, i.e.
we show that $dA_1 = dA_2$. 
We begin by stating an integral identity, which readily follows from the assumption
that $C_{A_1,q_1}=C_{A_2,q_2}$. The proof
can be found in \cite{KU} and only minor modifications are needed to make it work
with electric potentials used here.
\begin{prop}
\label{int_identity}
Let $\Omega\subset \R^n$, $n\ge 3$,  be a bounded open set with Lipschitz boundary.
Assume that $p_1,p_2\in L^\infty(\Omega,\C)$ and 
$A_1,A_2,F_1,F_2\in C^{0,\gamma}(\Omega,\C^n)$, with $0 < \gamma \leq 1$.
If $C_{A_1,q_1}=C_{A_2,q_2}$,
then the following integral identity 
\begin{align}
\label{eq:intId}
\int_\Omega i&(A_1-A_2)\cdot (u_1\nabla \overline{u_2}-\overline{u_2}\nabla u_1)
            + (A_1^2-A_2^2+p_1-p_2)u_1\overline{u_2}  \nonumber \\
            -&(F_1-F_2) \cdot (u_1\nabla \overline{u_2}+\overline{u_2}\nabla u_1)\,dx=0 
\end{align}
holds for any $u_1,u_2\in H^1(\Omega)$ satisfying $L_{A_1,q_1}u_1=0$ in
$\Omega$ and $L_{\ov{A_2},\ov{q_2}}u_2=0$ in $\Omega$,
respectively.  
\end{prop}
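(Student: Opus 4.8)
The plan is to read off the identity \eqref{eq:intId} directly from the weak formulations of $L_{A,q}$ and $N_{A,q}$ introduced in Section~2, using only the equality of Cauchy data. Throughout I would write $B_{A,q}(\phi,\psi)$ for the bilinear form
\[
B_{A,q}(\phi,\psi) = \int_\Omega \nabla\phi\cdot\nabla\psi + iA\cdot(\phi\nabla\psi - \psi\nabla\phi) + (A^2+p)\phi\psi - F\cdot\nabla(\phi\psi)\,dx,
\]
so that $\langle L_{A,q}\phi,\psi\rangle = B_{A,q}(\phi,\psi)$ for $\psi\in H^1_0(\Omega)$, and $\langle N_{A,q}u,\varphi\rangle = B_{A,q}(u,\phi)$ whenever $L_{A,q}u=0$ and $\phi$ extends $\varphi$. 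The first, routine, observation is that the integrand of \eqref{eq:intId} is exactly $B_{A_1,q_1}(u_1,\ov{u_2}) - B_{A_2,q_2}(u_1,\ov{u_2})$: on subtracting the two forms the Dirichlet terms cancel, the magnetic terms combine into $i(A_1-A_2)\cdot(u_1\nabla\ov{u_2}-\ov{u_2}\nabla u_1)$, the potential terms into $(A_1^2-A_2^2+p_1-p_2)u_1\ov{u_2}$, and the remaining terms into $-(F_1-F_2)\cdot\nabla(u_1\ov{u_2})$, which equals $-(F_1-F_2)\cdot(u_1\nabla\ov{u_2}+\ov{u_2}\nabla u_1)$. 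So it suffices to prove $B_{A_1,q_1}(u_1,\ov{u_2}) = B_{A_2,q_2}(u_1,\ov{u_2})$.

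For the left-hand side, since $L_{A_1,q_1}u_1=0$ and $\ov{u_2}$ is an admissible extension of its own boundary trace, the definition of the magnetic Neumann map gives $B_{A_1,q_1}(u_1,\ov{u_2}) = \langle N_{A_1,q_1}u_1,\ov{u_2}|_{\p\Omega}\rangle$. Here I would invoke $C_{A_1,q_1}=C_{A_2,q_2}$: there exists $\tilde u_2\in H^1(\Omega)$ solving $L_{A_2,q_2}\tilde u_2=0$ with the same Cauchy data as $u_1$, that is $\tilde u_2|_{\p\Omega}=u_1|_{\p\Omega}$ and $N_{A_2,q_2}\tilde u_2 = N_{A_1,q_1}u_1$. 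Pairing against $\ov{u_2}|_{\p\Omega}$ then yields $\langle N_{A_1,q_1}u_1,\ov{u_2}|_{\p\Omega}\rangle = \langle N_{A_2,q_2}\tilde u_2,\ov{u_2}|_{\p\Omega}\rangle = B_{A_2,q_2}(\tilde u_2,\ov{u_2})$.

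It remains to replace $\tilde u_2$ by $u_1$ in this last form. Setting $w := \tilde u_2 - u_1\in H^1_0(\Omega)$, bilinearity reduces the claim to $B_{A_2,q_2}(w,\ov{u_2})=0$. This is the one genuine obstacle, since $B$ is not symmetric in its arguments — its magnetic term is antisymmetric — so the identity cannot be gotten by a bare symmetry argument. The plan is to conjugate the weak equation for the test solution: because $L_{\ov{A_2},\ov{q_2}}u_2=0$ and $\ov{w}\in H^1_0(\Omega)$, we have $B_{\ov{A_2},\ov{q_2}}(u_2,\ov{w})=0$, and a term-by-term inspection of complex conjugates — in which the factor $i$ changes sign while the coefficients $\ov{A_2},\ov{p_2},\ov{F_2}$ return to $A_2,p_2,F_2$, and the two sign changes in the magnetic term compensate — shows $\ov{B_{\ov{A_2},\ov{q_2}}(u_2,\ov{w})} = B_{A_2,q_2}(w,\ov{u_2})$. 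Hence $B_{A_2,q_2}(w,\ov{u_2})=0$, and combining the three steps gives $B_{A_1,q_1}(u_1,\ov{u_2}) = B_{A_2,q_2}(u_1,\ov{u_2})$, i.e. \eqref{eq:intId}.

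The only analytic input is that each coefficient term defines a bounded pairing, so that every bracket above is finite; this holds for $A,F\in C^{0,\gamma}(\Omega,\C^n)\subset L^\infty$ and $p\in L^\infty$, exactly as in the maps $m_A,m_{\nabla\cdot F}\colon H^1(\Omega)\to H^{-1}(\Omega)$ used in Section~3. Thus no estimates beyond the weak-formulation bounds are needed, which is why, as noted, only minor modifications of the corresponding computation in \cite{KU} — namely the bookkeeping of the extra $F\cdot\nabla(\,\cdot\,)$ term in $B_{A,q}$ — are required.
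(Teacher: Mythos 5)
Your argument is correct, and it is essentially the argument the paper itself invokes: the paper gives no proof but defers to the corresponding computation in \cite{KU}, which proceeds exactly as you do — pair $u_1$ against $\ov{u_2}$ via $N_{A_1,q_1}$, use equality of Cauchy data to replace it by a solution $\tilde u_2$ of the second equation, and kill the $H^1_0$ discrepancy $w=\tilde u_2-u_1$ by conjugating the weak equation $L_{\ov{A_2},\ov{q_2}}u_2=0$. Your bookkeeping of the extra $F\cdot\nabla(\cdot)$ term (symmetric in the two arguments, so it causes no trouble in the conjugation step) is precisely the ``minor modification'' the paper alludes to.
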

\bigskip

The idea is then to choose specific CGO solutions and insert them into the integral identity 
and then show that this reduces, in the limit $h \to 0$ to a specific Fourier transform.
The CGO will be chosen as follows.   
Let $\xi,\mu_1,\mu_2\in\R^n$ be such that $|\mu_1|=|\mu_2|=1$ and
$\mu_1\cdot\mu_2=\mu_1\cdot\xi=\mu_2\cdot\xi=0$. Define
\begin{align} \label{eq_zeta_1_2}
\zeta_1 &=\frac{ih\xi}{2}+\mu_1 + i\sqrt{1-h^2\frac{|\xi|^2}{4}}\mu_2 , \nonumber \\
\zeta_2 &=-\frac{ih\xi}{2}-\mu_1+i\sqrt{1-h^2\frac{|\xi|^2}{4}}\mu_2,
\end{align}
so that $\zeta_j\cdot\zeta_j=0$, $j=1,2$, and
\begin{align} \label{eq:z1_plus_z2}
    (\zeta_1+\ov{\zeta_2})/h=i\xi. 
\end{align}
Here $h>0$ is a small enough.
Moreover, $\zeta_1= \mu_1+ i\mu_2+\mathcal{O}(h)$ and $\zeta_2= -\mu_1+ i\mu_2+\mathcal{O}(h)$ as $h\to 0$. 

For all $h>0$, that are small enough there
exists, by Proposition \ref{CGOest}  a solution $u_1(x,\zeta_1;h)\in H^1(\Omega)$ to the 
equation $L_{A_1,q_1}u_1=0$ in $\Omega$, of the form
\begin{equation}
\label{eq_u_1}
u_1(x,\zeta_1;h)=e^{x\cdot\zeta_1/h}(e^{\Phi_1^\sharp(x,\mu_1+i\mu_2;h)}+r_1(x,\zeta_1;h)),
\end{equation}
where $\Phi_1^\sharp(\cdot,\mu_1+i\mu_2;h) \in C^\infty(\R^n) \cap L^\infty(\R^n)$ is given by 
\begin{equation}
\label{eq_phi_1_sharp_def}
\Phi_{1}^\sharp(\cdot,\mu_1+i\mu_2;h):=N_{\mu_1+i\mu_2}^{-1}
\big(-i(\mu_1+i\mu_2)\cdot A_1^\sharp\big)
\end{equation}
and $\Phi_1^\sharp(\cdot,\mu_1+i\mu_2;h) \to \Phi_1(\cdot,\mu_1+i\mu_2)$
in $L^\infty(\R^n)$ as $h\to 0$, where $\Phi_1$ is given by 
Proposition \ref{CGOest}. 

Similarly, for all $h>0$ small enough, there exists a solution
$u_2(x,\zeta_2;h)\in H^1(\Omega)$ to the equation
$L_{\overline{A_2},\overline{q_2}}u_2=0$ in $\Omega$, of the form
\begin{equation}
\label{eq_u_2}
u_2(x,\zeta_2;h)=e^{x\cdot\zeta_2/h}(e^{\Phi_2^\sharp(x,-\mu_1+i\mu_2;h)}+r_2(x,\zeta_2;h)),
\end{equation}
where $\Phi_2^\sharp(\cdot,-\mu_1+i\mu_2;h) \in C^\infty(\R^n) \cap L^\infty(\R^n)$ is given by 
\begin{equation}
\label{eq_phi_2_sharp_def}
\Phi_{2}^\sharp(\cdot,-\mu_1+i\mu_2;h):=N_{-\mu_1+i\mu_2}^{-1}
\big(-i(-\mu_1+i\mu_2)\cdot \ov{A_2^\sharp}\big)
\end{equation}
and $\Phi_2^\sharp(\cdot,-\mu_1+i\mu_2;h) \to \Phi_2(\cdot,-\mu_1+i\mu_2)$
in $L^\infty(\R^n)$ as $h\to 0$, where $\Phi_2$ is given by 
Proposition \ref{CGOest}. 

Notice also that we have by estimates \eqref{es:phi_h_est} and \eqref{es:r_h_est},
of Proposition \ref{CGOest}, that 
\begin{align}
    \|\nabla \Phi^\sharp_j\|_{L^\infty(\R^n)} &\leq C h^\frac{\gamma-1}{2}, \label{es:phi_h}\\ 
           \|r_j\|_{H^1_{\emph{scl}}(\Omega)} &\leq  C h^{\gamma/2}, \label{es:r_h}
\end{align}
for $j=1,2$.

The next step is to insert the $u_1$ and $u_2$ specified above into \eqref{eq:intId}, multiply 
by $h$ and let $h \to 0$, in an attempt to obtain a Fourier transform of the magnetic field.
This is done in the next Lemma. The proof is based on the argument found in \cite{KU}. The difference is 
however in how the electric potential is estimated. The crucial observation is that 
the last term in \eqref{eq:intId} containing the electric potentials,
goes to zero, in $h$ when multiplied with an extra factor of $h$,
even though it closely resembles the first term with the magnetic potentials, for which this 
does not happen.

\begin{lem}\label{LemTempFourier}  For $A_1,A_2,\mu_1,\mu_2$ and $\xi$ as above we have that
\begin{equation}
\label{eq:with_phases_R_n}
(\mu_1+i\mu_2)\cdot\int_{\R^n} (A_1-A_2) e^{ix\cdot\xi} e^{\Phi_{1}+\ov{\Phi_{2}}}dx=0.
\end{equation}
\end{lem}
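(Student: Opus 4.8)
The plan is to insert the CGO solutions $u_1$ from \eqref{eq_u_1} and $u_2$ from \eqref{eq_u_2} into the integral identity \eqref{eq:intId}, multiply the whole identity by $h$, and let $h \to 0$. The key algebraic point is the phase bookkeeping: writing $u_j = e^{x \cdot \zeta_j / h}(a_j + r_j)$ with $a_j = e^{\Phi_j^\sharp}$, the product of the exponential phases collapses by \eqref{eq:z1_plus_z2} to $e^{x \cdot (\zeta_1 + \ov{\zeta_2})/h} = e^{ix \cdot \xi}$, which is independent of $h$ and produces the Fourier kernel we want. The gradient terms $u_1 \nabla \ov{u_2} - \ov{u_2} \nabla u_1$ each carry a factor $1/h$ from differentiating the exponential, and this $1/h$ is exactly cancelled by the extra factor of $h$ we multiplied in; the leading surviving contribution is $(\zeta_1 - \ov{\zeta_2})/h \cdot (\text{stuff})$, and since $(\zeta_1 - \ov{\zeta_2})/h \to 2(\mu_1 + i\mu_2)$ as $h \to 0$, this is where the vector $\mu_1 + i\mu_2$ in the statement comes from.

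First I would expand each of the three integrand groups in \eqref{eq:intId} after inserting the CGO form and extracting the common phase $e^{ix\cdot\xi}$. For the magnetic term $i(A_1 - A_2)\cdot(u_1 \nabla \ov{u_2} - \ov{u_2}\nabla u_1)$, differentiating $\ov{u_2}$ and $u_1$ gives a dominant piece proportional to $(1/h)(\ov{\zeta_2} - \zeta_1)(a_1 \ov{a_2} + \text{remainder products})$ together with lower-order pieces where the gradient hits the amplitudes $\nabla a_j = (\nabla \Phi_j^\sharp) a_j$ or the remainders $\nabla r_j$. After multiplying by $h$, the dominant piece survives with the finite coefficient $2(\mu_1+i\mu_2)$, while the amplitude-gradient pieces carry factors controlled by \eqref{es:phi_h}, namely $h \cdot h^{(\gamma-1)/2} = h^{(\gamma+1)/2} \to 0$, and the remainder contributions are controlled by \eqref{es:r_h} via Cauchy--Schwarz in $H^1_{\textrm{scl}}$. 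Crucially, since $|\zeta_j| \sim 1$, the products $e^{x\cdot\zeta_j/h}$ have modulus $e^{x\cdot\Re\zeta_j/h}$, so one must check these real exponentials also cancel between $u_1$ and $\ov{u_2}$; by the construction $\Re\zeta_1 = \mu_1 + \mathcal{O}(h)$ and $\Re\ov{\zeta_2} = -\mu_1 + \mathcal{O}(h)$, so $\Re(\zeta_1 + \ov{\zeta_2})/h = \mathcal{O}(1)$ and no exponential blow-up occurs.

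Next I would treat the electric term $(A_1^2 - A_2^2 + p_1 - p_2)u_1 \ov{u_2}$ and the $F$-term $-(F_1-F_2)\cdot(u_1\nabla\ov{u_2} + \ov{u_2}\nabla u_1)$. The electric term has no gradient and hence no compensating $1/h$, so after multiplying by $h$ it is $\mathcal{O}(h)$ and vanishes in the limit. The $F$-term is the delicate one singled out in the remark preceding the Lemma: although it structurally resembles the magnetic term and therefore also produces a dominant $(1/h)$ piece proportional to $(\ov{\zeta_2}+\zeta_1)/h = i\xi = \mathcal{O}(1)$, the extra factor of $h$ renders this $\mathcal{O}(h)$ as well, so it too drops out. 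This asymmetry — the $\zeta_1 - \ov{\zeta_2}$ combination staying $\mathcal{O}(1/h)$ while $\zeta_1 + \ov{\zeta_2}$ becomes $\mathcal{O}(1)$ — is precisely why the magnetic potential contributes at leading order but $F$ does not. Finally I would use the convergence $\Phi_j^\sharp \to \Phi_j$ in $L^\infty$ from \eqref{es:PhiPhiSharp} together with dominated convergence to replace $e^{\Phi_1^\sharp + \ov{\Phi_2^\sharp}}$ by $e^{\Phi_1 + \ov{\Phi_2}}$ in the surviving integral, and divide out the surviving scalar factor of $2$.

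The main obstacle I expect is controlling the remainder cross-terms uniformly in $h$. The products $r_1 \nabla \ov{r_2}$, $a_1 \nabla \ov{r_2}$, $r_1 \nabla \ov{a_2}$ and so on must all be shown to vanish after multiplication by $h$, and the natural tool is Cauchy--Schwarz pairing one factor in $L^2(\Omega)$ against a gradient factor in $L^2(\Omega)$, invoking $\|r_j\|_{H^1_{\textrm{scl}}} \le C h^{\gamma/2}$ from \eqref{es:r_h} and $\|\nabla\Phi_j^\sharp\|_{L^\infty} \le C h^{(\gamma-1)/2}$ from \eqref{es:phi_h}. One must track the semiclassical weights carefully: a bare $\nabla r_j$ costs a factor $1/h$ relative to the semiclassical norm $\|h\nabla r_j\|_{L^2}$, and the whole estimate only closes because $\gamma > 2/3$ guarantees the competing exponents stay positive. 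Verifying that every such cross-term carries a strictly positive power of $h$ after the bookkeeping — and in particular that the worst term, which combines one remainder with one amplitude-gradient, behaves like $h \cdot h^{-1} \cdot h^{(\gamma-1)/2} \cdot h^{\gamma/2} = h^{\gamma - 1/2}$ or better — is the technical heart of the argument.
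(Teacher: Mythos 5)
Your proposal follows essentially the same route as the paper's proof: insert the CGO solutions into \eqref{eq:intId}, multiply by $h$, observe that the antisymmetric combination $u_1\nabla\ov{u_2}-\ov{u_2}\nabla u_1$ yields the $\mathcal{O}(1)$ coefficient $\ov{\zeta_2}-\zeta_1\to-2(\mu_1+i\mu_2)$ while the symmetric combination in the $F$-term yields $\zeta_1+\ov{\zeta_2}=ih\xi=\mathcal{O}(h)$, control all cross terms by Cauchy--Schwarz with \eqref{es:r_h} and \eqref{es:phi_h}, and pass from $\Phi_j^\sharp$ to $\Phi_j$ via \eqref{es:PhiPhiSharp}. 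Two small corrections: the surviving coefficient after multiplying by $h$ is $\zeta_1-\ov{\zeta_2}$ itself (not $(\zeta_1-\ov{\zeta_2})/h$, which blows up), and the condition $\gamma>2/3$ plays no role in this lemma --- no cross term pairing a remainder gradient with an amplitude gradient actually occurs in the expansions \eqref{eq:u1du2} and \eqref{eq:u2du1}, the worst term is $\mathcal{O}(h^{\gamma/2})$, and (as the paper remarks at the end of Section 4) any $0<\gamma\leq 1$ suffices for the magnetic part; the restriction $\gamma>2/3$ is only needed later for the electric potential.
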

\begin{proof} We use the abbreviations $A:=A_1-A_2$, $F:=F_1-F_2$ and $p:=p_1-p_2$. 
First we multiply \eqref{eq:intId} by $h$. For the non-gradient terms in \eqref{eq:intId}
we have by \eqref{es:r_h} that
\begin{align*}
\Big|
h \int_\Omega &(A_1^2 -A_2^2+p)u_1\overline{u_2}\,dx \Big|  
\\ &= \Big| h \int_\Omega (A_1^2-A_2^2+p) e^{ix\cdot\xi}(e^{\Phi_1^\sharp+\ov{\Phi_2^\sharp}}
+e^{\Phi_1^\sharp}\ov{r_2}+r_1e^{\ov{\Phi_2^\sharp}}+r_1\ov{r_2})
\,dx \Big|  \\
    &\leq Ch \| A_1^2-A_2^2+p\|_{L^\infty}
    \Big( 
    \|e^{\Phi_1^\sharp+\ov{\Phi_2^\sharp}}\|_{L^\infty}
    + \|e^{\Phi_1^\sharp}\|_{L^\infty} \|\ov{r_2}\|_{L^2}  \\
    &\quad\quad\quad\quad\quad\quad\quad\quad\quad\quad
    + \|r_1\|_{L^2} \|e^{\ov{\Phi_2^\sharp}}\|_{L^\infty} 
    + \|r_1\|_{L^2}\| \ov{r_2}\|_{L^2} \Big) \\
    &\leq C h \to 0,
\end{align*}
as $h \to 0$. For our specific CGO solutions, $u_1$ and $u_2$, we hence have that 
\begin{align} \label{eq:modIntId}
h \Big| \int_\Omega 
 iA \cdot (u_1\nabla \overline{u_2}-\overline{u_2}\nabla u_1)\,dx
- h \int_\Omega 
F \cdot (u_1\nabla \overline{u_2}+\overline{u_2}\nabla u_1)
\,dx   \Big| =  \mathcal{O}(h),
\end{align}
as  $h \to 0$.

We continue by estimating the first integral in \eqref{eq:modIntId}. 
Since the solutions $u_1$  and $u_2$ are of the CGO form one gets the following by expanding 
\begin{align} \label{eq:u1du2}
hu_1\nabla\ov{u_2}=&\ov{\zeta_2}e^{ix\cdot\xi}(e^{\Phi_1^\sharp+\ov{\Phi_2^\sharp}}
+e^{\Phi_1^\sharp}\ov{r_2}+r_1e^{\ov{\Phi_2^\sharp}}+r_1\ov{r_2})\\
&+he^{ix\cdot\xi}(e^{\Phi_1^\sharp}\nabla e^{\ov{\Phi_2^\sharp}} +
e^{\Phi_1^\sharp}\nabla \ov{r_2} + r_1\nabla e^{\ov{\Phi_2^\sharp}}
+r_1  \nabla \ov{r_2}) \nonumber.
\end{align}
The first term in the first parantheses in \eqref{eq:u1du2} gives 
\begin{align}\label{lim:first}
\ov{\zeta_2} \cdot\int_\Omega iA
e^{ix\cdot\xi}e^{\Phi_1^\sharp+\ov{\Phi_2^\sharp}}dx\to
-(\mu_1+i\mu_2)\cdot\int_\Omega iA
e^{ix\cdot\xi}e^{\Phi_1+\ov{\Phi_2}}dx.
\end{align} 
as $h\to 0$.  This is because $\ov{\zeta_2}=-\mu_1-i\mu_2+\mathcal{O}(h)$  and
by \eqref{es:PhiPhiSharp} we have that
\begin{align*}
\bigg| (\mu_1+i\mu_2)\cdot\int_\Omega A
e^{ix\cdot\xi}\big(e^{\Phi_1^\sharp+\ov{\Phi_2^\sharp}}-e^{\Phi_1+\ov{\Phi_2}}\big)dx \bigg|
&\leq C\big\|e^{\Phi_1^\sharp+\ov{\Phi_2^\sharp}}- e^{\Phi_1+\ov{\Phi_2}}\big\|_{L^\infty(\Omega)}\\
&\leq C h^{\gamma/2} \to 0
\end{align*}
as $h\to 0$. 
For the next three terms in \eqref{eq:u1du2}, we can use estimate \eqref{es:r_h} and Cauchy--Schwarz
to conclude that
\begin{align} \label{lim:middle}
\bigg|\int_\Omega & iA \cdot \overline{\zeta_2}
    e^{ix\cdot\xi}(e^{\Phi_1^\sharp}\overline{r_2}+r_1e^{\overline{\Phi_2^\sharp}}+r_1\overline{r_2})dx\bigg| 
    \nonumber\\
    &\le C\| A \|_{L^\infty}
    (\big\|e^{\Phi_1^\sharp}\big\|_{L^2}\|\overline{r_2}\|_{L^2}+\|r_1\|_{L^2}\big\|
    e^{\overline{\Phi_2^\sharp}}\big\|_{L^2}+\|r_1\|_{L^2}\|\overline{r_2}\|_{L^2}) \\
    & \leq C h^{\gamma/2} \to 0, \nonumber
\end{align}
as $h\to 0$. For the last part of \eqref{eq:u1du2} containing the factor $h$, we have 
using estimates \eqref{es:r_h} and \eqref{es:phi_h}   that
\begin{align} \label{lim:last}
\bigg|\int_\Omega h iA \cdot e^{ix\cdot\xi}(e^{\Phi_1^\sharp}\nabla
e^{\overline{\Phi_2^\sharp}} + e^{\Phi_1^\sharp}\nabla \overline{r_2} +
r_1\nabla e^{\overline{\Phi_2^\sharp}} +r_1  \nabla \overline{r_2})dx\bigg|\\
\le C h \big( h^{(\gamma-1)/2}+ h^{-1}h^{\gamma/2}+ h^{\gamma/2}h^{(\gamma-1)/2}+h^{\gamma}h^{-1} \big)
 \to 0, \nonumber
\end{align}
as $h\to 0$. 
Expanding the $\ov{u_2}\nabla u_1$ term in \eqref{eq:modIntId} gives
\begin{align} \label{eq:u2du1}
h\ov{u_2}\nabla u_1 = &\zeta_1 e^{ix\cdot\xi}(e^{\Phi_1^\sharp+\ov{\Phi_2^\sharp}}
+e^{\Phi_1^\sharp}\ov{r_2}+r_1e^{\ov{\Phi_2^\sharp}}+r_1\ov{r_2})\\
&+he^{ix\cdot\xi}(\nabla e^{\Phi_1^\sharp}e^{\ov{\Phi_2^\sharp}} +
 \nabla e^{\Phi_1^\sharp} \ov{r_2} + \nabla r_1 e^{\ov{\Phi_2^\sharp}}
+\nabla r_1 \ov{r_2}) \nonumber.
\end{align}
Again $-\zeta_1=-\mu_1-i\mu_2+\mathcal{O}(h)$. The terms in \eqref{eq:u1du2} and \eqref{eq:u2du1}
are of the same form. Doing the analogous estimates for  \eqref{eq:u2du1} gives then that 
\begin{align*}
h \int_\Omega 
 iA \cdot (u_1\nabla \overline{u_2}-\overline{u_2}\nabla u_1) \,dx
 \to
-2i(\mu_1+i\mu_2)\cdot\int_{\R^n} A e^{ix\cdot\xi} e^{\Phi_{1}+\ov{\Phi_{2}}}dx,
\end{align*}
as  $h \to 0$. 

We end the proof by showing that
\begin{align} \label{lim:Fpart}
h \int_\Omega F \cdot (u_1\nabla \overline{u_2}+\overline{u_2}\nabla u_1)
d x  \to 0,
\end{align}
as  $h \to 0$. Using \eqref{eq:u1du2} and \eqref{eq:u2du1} gives that
\begin{align} \label{eq:ududuu}
h(u_1\nabla \overline{u_2}+\overline{u_2}\nabla u_1) \;=\; 
&(\ov{\zeta_2}+\zeta_1) e^{ix\cdot\xi} 
\big(e^{\Phi_1^\sharp+\ov{\Phi_2^\sharp}}
+e^{\Phi_1^\sharp}\ov{r_2}+r_1e^{\ov{\Phi_2^\sharp}}+r_1\ov{r_2} \big)\nonumber \\
&+he^{ix\cdot\xi} \big(
e^{\Phi_1^\sharp}\nabla e^{\ov{\Phi_2^\sharp}} 
+ e^{\Phi_1^\sharp}\nabla \ov{r_2} + r_1\nabla e^{\ov{\Phi_2^\sharp}}
+ r_1  \nabla \ov{r_2}\\ 
&\quad\quad\quad\quad
+\nabla e^{\Phi_1^\sharp}e^{\ov{\Phi_2^\sharp}} 
+ \nabla e^{\Phi_1^\sharp} \ov{r_2} + \nabla r_1 e^{\ov{\Phi_2^\sharp}}
+ \nabla r_1 \ov{r_2} 
\big). \nonumber
\end{align}
The second term on the right hand side is of the same form as the second term
on the right hand side of \eqref{eq:u1du2} and \eqref{eq:u2du1}. The contribution of these
terms are therefore zero in the limit $h \to 0$. 

For the first term on the right hand side of  \eqref{eq:ududuu} we get, 
using \eqref{eq:z1_plus_z2} and \eqref{es:r_h}, the estimate
\begin{align*}
\bigg|  h \int_{\Omega} &i\xi \cdot F ( e^{ix\cdot\xi} 
(e^{\Phi_1^\sharp+\ov{\Phi_2^\sharp}}
+e^{\Phi_1^\sharp}\ov{r_2}+r_1e^{\ov{\Phi_2^\sharp}}+r_1\ov{r_2})
    ) \,dx\bigg| \\
    &\leq \mathcal{O}(h)(1+h^{\gamma/2} + h^{\gamma/2}  + h^{\gamma} ) \to 0,
\end{align*}
as $h\to0$.
This shows that \eqref{lim:Fpart}
holds.
\end{proof}

It turns out that the $e^{\Phi_{1}+\ov{\Phi_{2}}}$ term can be dropped from \eqref{eq:with_phases_R_n}. 
This is guaranteed by Proposition 3.3 in \cite{KU} (see also \cite{ER} and \cite{S}). 
Using the abbreviation $A:=A_1-A_2$ we thus obtain
\begin{align*}
    (\mu_1+i\mu_2) \cdot \int_{\R^n} A e^{ix\cdot\xi} dx = (\mu_1+i\mu_2)\cdot\widehat{A}(-\xi) = 0,
\end{align*}
where $\widehat{A}$ stands for the Fourier transform of $A$.
Moreover for any $\mu \in \R^n$, with $\mu \cdot \xi = 0$, we have therefore that 
$\mu \cdot \widehat{A} =0$. 
It follows that  the Fourier transform of the component functions of
\eqref{eq:magDef} are zero. To see this notice that the above implies that 
\begin{align*}
    \xi_j \widehat{A_k} - \xi_k \widehat{A_j} =
    (\xi_j e_k - \xi_k e_j) \cdot \widehat{A} = 0,
\end{align*}
since $\xi \cdot (\xi_j e_k - \xi_k e_j) = 0$, where $e_k$ denote the standard basis
vectors of $\R^n$. We have thus proved that $dA_1 = dA_2$.
\\

\textbf{Remark.} Notice that, we only need the condition $0<\gamma \leq 1$ 
in recovering the magnetic potentials,  instead of $2/3<\gamma \leq 1$.

\section{Uniqueness of the electric potential}
To finish the proof of Proposition \ref{thm2}, 
we need to show that $q_1 = \nabla \cdot F_1 +p_1 = \nabla \cdot F_2+p_2=q_2$.
Lemma \ref{HoldExt} and the assumption that $A_1=A_2$, $F_1=F_2$ on $\p \Omega$ and that
$\p \Omega$ is Lipschitz,
allows us to extend $A_j$ and $F_j$, $j=1,2$ to a ball $B$,
with $\ov{\Omega} \subset B$, so  that $A_1 = A_2$ and $F_1 = F_2$ in $B\setminus\Omega$,
$F_j = A_j = 0$ on $\p B$ and $A_j,F_j \in C^{0,\gamma}(B)$, for $j=1,2$.

In the previous section we proved that  $d(A_1 -A_2) = 0$. The Poincar\'{e} Lemma
implies now that there is a $\psi \in C^{1,\gamma}(B)$ s.t.
$A_1-A_2 = \nabla \psi$ in $B$ (see \cite{CDK}). We can moreover choose $\psi$ so 
that $\psi|_{\p B} = 0$, since $A_1=A_2=0$ in $B \setminus \Omega$.
By Lemma \ref{lem_Cauchy_data} and Proposition \ref{prop_gauge_1} below, we have that
\[
    C_{A_1,q_1}^{B}=C_{A_2,q_2}^{B}
    =C_{A_2+\nabla\psi,q_2}^{B}=C_{A_1,q_2}^{B}.
\]
Proposition \ref{int_identity} gives then that
\begin{align} \label{eq:redIntId}
    \int_B (-F \cdot \nabla(u_1 \ov{u_2}) + p u_1\ov{u_2})\,dx = 0,
\end{align}
for any $u_1,u_2 \in H^1(B)$, satisfying $L_{A_1,q_1}u_1=0$, $L_{\ov{A_2},\ov{q_2}}u_2=0$
in $B$ and where $F := F_1-F_2$ and $p:=p_1-p_2$.

We now suppose, as in section \ref{sec:magUniq} that $u_1$ and $u_2$ are
given by \eqref{eq_u_1} and \eqref{eq_u_2} (when $\Omega = B$), with $A_1=A_2$
and consider the limit of \eqref{eq:redIntId} as $h \to 0$.  
Expanding \eqref{eq:redIntId}, using \eqref{eq:z1_plus_z2} gives 
\begin{align} \label{eq:expIdInt}
    &\int_B -F\cdot i\xi e^{ix\cdot\xi}(e^{\Phi_1^\sharp+\ov{\Phi_2^\sharp}}
    +e^{\Phi_1^\sharp}\ov{r_2}+r_1e^{\ov{\Phi_2^\sharp}}+r_1\ov{r_2}) \,dx \nonumber \\
    &+\int_B -F\cdot e^{ix\cdot\xi} \nabla(e^{\Phi_1^\sharp+\ov{\Phi_2^\sharp}}
    +e^{\Phi_1^\sharp}\ov{r_2}+r_1e^{\ov{\Phi_2^\sharp}}+r_1\ov{r_2})\,dx\\ 
    &+\int_B p u_1 \ov{u_2}\,dx = 0. \nonumber
\end{align}
We begin by showing that the second integral in \eqref{eq:expIdInt} tends to zero,
in the limit $h \to 0$. 

We simplify \eqref{eq:expIdInt} firstly by writing $\widetilde{F} := Fe^{ix\cdot\xi}$.
Notice also that $\widetilde{F} \in C^{0,\gamma}(B)$.
The second simplification comes from the fact that $e^{\Phi_1^\sharp+\ov{\Phi_2^\sharp}} = 1$. To show 
this notice first that the Cauchy operator has the  
following properties 
\begin{align*}
\ov{N_{\zeta}^{-1}f}=N_{\ov{\zeta}}^{-1}\ov{f},\quad  N_{-\zeta}^{-1}f=-N_\zeta^{-1}f.
\end{align*}
Applying these to the definitions \eqref{eq_phi_1_sharp_def} and  \eqref{eq_phi_2_sharp_def}
together with the fact that we are now considering the case with $A_1 = A_2$  yields
\begin{align*}
    \Phi_1^\sharp+\ov{\Phi_2^\sharp} = 
    N^{-1}_{\mu_1+i\mu_2}\big (-i(\mu_1+i\mu_2) \cdot (A_1^\sharp - A_2^\sharp) \big) =0, 
\end{align*}
so that 
\begin{align} \label{eq:simp2}
    e^{\Phi_1^\sharp+\ov{\Phi_2^\sharp}} = 1.
\end{align}

Split the second integral in \eqref{eq:expIdInt} into pieces by taking the absolute value and applying 
the triangle inequality. Consider first the first term of the second integral in \eqref{eq:expIdInt}. 
By \eqref{eq:simp2} we have immediately that
\begin{align} \label{es:Fp1}
    \Big|  \int_B \widetilde{F} \cdot \nabla  e^{\Phi_1^\sharp+\ov{\Phi_2^\sharp}}\,dx  \Big|
    = 0.
\end{align} 
Next we consider the terms $\nabla(e^{\Phi_1^\sharp}\ov{r_2})$ and $\nabla(r_1e^{\ov{\Phi_2^\sharp}})$,
coming from  the second integral in \eqref{eq:expIdInt}. 
Notice firstly that $\widetilde{F}|_{\p B} =0$, since $F|_{\p B} =0$.
Letting $\widetilde{F}^\sharp  := \Psi_\theta * \widetilde{F}$, where $\Psi_\theta$ is 
defined as in the beginning of section \ref{sec:cgo} and using the estimates of Proposition \ref{CGOest}
 and Lemma \ref{LemApprox} we get that
\begin{align} \label{es:Fp2}
    \Big|  \int_B \widetilde{F} \cdot \nabla \big( e^{\Phi_1^\sharp}\ov{r_2}\big) \,dx  \Big|
    &= \Big|  \int_B \widetilde{F} \cdot \big( \nabla e^{\Phi_1^\sharp}\ov{r_2} 
    + e^{\Phi_1^\sharp}\nabla \ov{r_2} \big) \,dx  \Big| \nonumber \\
    &\lesssim
    \| \widetilde{F} \cdot \nabla e^{\Phi_1^\sharp}\|_{\infty} \|\ov{r_2} \|_{2}
    + \Big|  \int_B \widetilde{F} \cdot e^{\Phi_1^\sharp}\nabla \ov{r_2}\,dx  \Big| \nonumber \\
    &\lesssim
    h^{(\gamma-1)/2}h^{\gamma/2}
    + \Big|  \int_B \widetilde{F} \cdot e^{\Phi_1^\sharp}\nabla \ov{r_2}\,dx  \Big| \nonumber \\
    &\lesssim
    h^{\gamma-1/2}
    + \Big|  \int_B \nabla \cdot \big(\widetilde{F}^\sharp e^{\Phi_1^\sharp} \big) \ov{r_2}\,dx  \Big| \\
    &\quad\quad\quad\quad
    + \Big|  \int_B \big(\widetilde{F} - \widetilde{F}^\sharp\big) \cdot e^{\Phi_1^\sharp}\nabla \ov{r_2}\,dx  
    \Big|  \nonumber\\
    &\lesssim
    h^{\gamma-1/2}
    + \theta^{1-\gamma} h^{\gamma/2}
    + \|\widetilde{F} - \widetilde{F}^\sharp\|_{\infty} \| \nabla \ov{r_2} \|_{2}  \nonumber\\
    &\lesssim
     h^{\gamma-1/2} 
     + \theta^{1-\gamma} h^{\gamma/2}
     + \theta^{-\gamma}h^{\gamma/2-1}. \nonumber
\end{align}
The last term from the second integral in \eqref{eq:expIdInt} is handled as follows
\begin{align} \label{es:Fp3}
    \Big| \int_B  \widetilde{F} \cdot \nabla (r_1 \ov{r_2}) \,dx\Big| 
    &\lesssim 
    \Big| \int_B  \nabla \cdot \widetilde{F}^\sharp r_1 \ov{r_2}\,dx \Big| 
    + \Big| \int_B  \big(\widetilde{F} -\widetilde{F}^\sharp\big)  
    \cdot \nabla (r_1 \ov{r_2})\,dx \Big| \nonumber \\
    &\lesssim \|\nabla \cdot \widetilde{F}^\sharp \|_{\infty} \|r_1\|_{2}\|\ov{r_2}\|_{2} \nonumber \\
    &\quad
    +\| \widetilde{F} -  \widetilde{F}^\sharp \|_{\infty} 
      \big( \|\nabla r_1\|_{2}\|\ov{r_2}\|_{2} + \|r_1\|_{2}\|\nabla\ov{r_2}\|_{2} \big) \\
    &\lesssim \theta^{1-\gamma} h^{\gamma/2} h^{\gamma/2} 
      + \theta^{-\gamma}h^{-1}h^{\gamma/2}h^{\gamma/2}  \nonumber \\
    &\lesssim \theta^{1-\gamma} h^{\gamma} + \theta^{-\gamma}h^{\gamma-1}. \nonumber
\end{align}
Combining \eqref{es:Fp1}, \eqref{es:Fp2} and \eqref{es:Fp3}
and then choosing $\theta = h^{-1}$, gives for the second integral in \eqref{eq:expIdInt} 
that
\begin{align*}
    \Big|  \int_B & F\cdot e^{ix\cdot\xi} \nabla(e^{\Phi_1^\sharp+\ov{\Phi_2^\sharp}}
    +e^{\Phi_1^\sharp}\ov{r_2}+r_1e^{\ov{\Phi_2^\sharp}}+r_1\ov{r_2})\,dx \Big|  \\
    &\lesssim
    \theta^{1-\gamma}h^{\gamma/2} + \theta^{-\gamma}h^{\gamma/2-1} + h^{\gamma-1/2}\\
    &=
    2 h^{(3\gamma-2)/2 } + h^{\gamma-1/2}  \to  0,
\end{align*}
as $h \to 0$, since we require that $\gamma > 2/3$.

We now return to the first integral in \eqref{eq:expIdInt}. It can be 
estimated using \eqref{es:r_h}
and the  Cauchy--Schwarz inequality 
as follows
\begin{align*}
    \Big| \int_B &-F\cdot i\xi e^{ix\cdot\xi}(
    e^{\Phi_1^\sharp}\ov{r_2}+r_1e^{\ov{\Phi_2^\sharp}}+r_1\ov{r_2}) \,dx \Big|  \\
    &\lesssim 
    \big \|e^{\Phi_1^\sharp}\big \|_{\infty}
    \big \|\ov{r_2}\big \|_{2}
    +\big \|r_1\big\|_{2} \big \|e^{\ov{\Phi_2^\sharp}}\big\|_{\infty}
    +\big \|r_1\big\|_{2} \big \|\ov{r_2}\big \|_{2} \\
    &\lesssim 
    h^{\gamma/2} \to 0,
\end{align*}
as $h \to 0$.
Estimating the third integral in \eqref{eq:expIdInt} in a simliar fashion
and using \eqref{eq:simp2}
we thus conclude that \eqref{eq:expIdInt} reduces to
\begin{align*}
    \int_B (-F\cdot i\xi e^{ix\cdot\xi} + p e^{ix\cdot \xi}) \,dx  = 0,
\end{align*}
in the limit $h \to 0$. This implies that $\mathcal{F} \big( \nabla \cdot F + p \big)(-\xi) = 0$
in the distributional sense, which 
in turn implies that $0= \nabla \cdot F + p = (\nabla \cdot F_1 + p_1) - (\nabla \cdot F_2 + p_2)$, 
finishing the proof of Proposition \ref{thm2}.

\section{Appendix A -- Gauge invariance and Boundary data} 
Gauge invariance plays  an important role when working with
the magnetic Schr\"odinger equation. Here we state the basic result concerning the 
gauge invariance of the Cauchy data sets. This section also includes two results on when 
the equality of the boundary
data on a smaller set implies the equality of the boundary data on a bigger set.

\begin{prop} \label{prop_gauge_1} 
Let $\Omega\subset \R^n$, $n\ge 3$, be a bounded open set with Lipschitz boundary.  
Assume $A,F \in C^{0,\gamma}(\Omega,\C^n)$, $0 < \gamma \leq 1$, $p \in L^\infty(\Omega, \C)$,
$\psi \in C^{1,\gamma}(\Omega,\C)$  and let $q = \nabla \cdot F + p$ . Then  we have 
    \begin{align} \label{eq:conj_lem}
        e^{-i\psi}\circ L_{A,q }\circ e^{i\psi}=L_{A+\nabla \psi, q}.
    \end{align}
    If furthermore, $\psi|_{\p \Omega}=0$ then 
    \begin{align} \label{eq:conj_lem_2}
        C_{A,q}=C_{A+\nabla\psi,q}. 
    \end{align}
\end{prop}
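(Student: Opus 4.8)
The plan is to prove the operator conjugation identity \eqref{eq:conj_lem} first, since the Cauchy data equality \eqref{eq:conj_lem_2} will follow from it almost immediately once one checks that the gauge transformation preserves boundary values. To establish \eqref{eq:conj_lem}, I would work entirely with the weak (distributional) definition of $L_{A,q}$ given in Section 2, testing against $\psi' \in H^1_0(\Omega)$ rather than manipulating the formal differential expression. The key observation is that $e^{i\psi}$ is a multiplier on $H^1(\Omega)$ because $\psi \in C^{1,\gamma}(\Omega)$, so $e^{i\psi}\phi \in H^1(\Omega)$ with $\nabla(e^{i\psi}\phi) = e^{i\psi}(\nabla\phi + i\phi\nabla\psi)$, and similarly multiplication by $e^{-i\psi}$ maps $H^1_0(\Omega)$ to itself. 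Thus for $\phi \in H^1(\Omega)$ and a test function $\eta \in H^1_0(\Omega)$, I would compute
\[
    \langle e^{-i\psi}\circ L_{A,q}\circ e^{i\psi}\,\phi, \eta\rangle = \langle L_{A,q}(e^{i\psi}\phi), e^{-i\psi}\eta\rangle,
\]
substitute into the bilinear form, and carry out the bookkeeping.

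The main computational step is to expand the bilinear form with $\phi$ replaced by $e^{i\psi}\phi$ and $\psi$ (the test function) replaced by $e^{-i\psi}\eta$, and check that the exponential factors cancel termwise. The gradient of the product $(e^{i\psi}\phi)(e^{-i\psi}\eta) = \phi\eta$ shows the $F$-term is unchanged, and the $(A^2+p)$ and magnetic cross terms reorganize: the extra $i\nabla\psi$ contributions from differentiating $e^{i\psi}$ should combine with the original $A$-terms to produce exactly the form with $A$ replaced by $A+\nabla\psi$, while the $(\nabla\psi)^2$ and $\nabla\psi$ pieces assemble into the new $(A+\nabla\psi)^2$ term. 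I expect this termwise cancellation to be the heart of the argument — it is routine algebra but must be done carefully to confirm that no residual terms survive and that the $F$-part and $p$-part of $q$ are genuinely untouched (which reflects the fact that the gauge acts only on $A$, not on $q$).

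For the Cauchy data equality \eqref{eq:conj_lem_2}, I would argue that the map $u \mapsto e^{i\psi}u$ is a bijection from the solution space of $L_{A+\nabla\psi,q}$ onto that of $L_{A,q}$, by \eqref{eq:conj_lem}. Since $\psi|_{\p\Omega}=0$, we have $e^{i\psi}|_{\p\Omega}=1$, so $u$ and $e^{i\psi}u$ have the same Dirichlet trace on $\p\Omega$; this matches the boundary-value components of the Cauchy data. The remaining point is to verify that the Neumann-type data $N_{A,q}$ also matches, i.e. $N_{A,q}(e^{i\psi}u) = N_{A+\nabla\psi,q}u$ as elements of $H^{-1/2}(\p\Omega)$. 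This follows from the definition of $N_{A,q}$ as the same bilinear form integrated against an extension $\phi$ of the test boundary datum, together with the gauge identity and the fact that extensions can be chosen compatibly; the exponential factor $e^{i\psi}$ inside the integral is absorbed exactly as in the proof of \eqref{eq:conj_lem}. I would note that the inclusion $C_{A+\nabla\psi,q}\subset C_{A,q}$ and its reverse both follow by applying the same argument with $\psi$ replaced by $-\psi$, giving the claimed equality.

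The step I expect to be the genuine obstacle is not any single estimate but the careful justification that $e^{i\psi}$ is a bounded multiplier on $H^1$ and that all products and integration-by-parts steps are legitimate at the stated regularity $\psi \in C^{1,\gamma}$ and $A,F \in C^{0,\gamma}$, $p \in L^\infty$; in particular one must ensure $\nabla\psi \in C^{0,\gamma}$ so that $A+\nabla\psi$ stays in the same class and the bilinear form for $L_{A+\nabla\psi,q}$ is well defined. Once the multiplier property and the termwise cancellation are in hand, the rest is formal.
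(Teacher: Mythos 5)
Your proposal is correct and follows essentially the same route as the paper: verify the conjugation identity \eqref{eq:conj_lem} on the weak (bilinear) formulation, noting in particular that the product $(e^{i\psi}\phi)(e^{-i\psi}\eta)=\phi\eta$ leaves the $F$-term untouched, and then obtain \eqref{eq:conj_lem_2} by checking that the gauge factor preserves the Dirichlet trace (since $\psi|_{\p\Omega}=0$) and that the Neumann data $N_{A,q}$ transforms correctly via the same cancellation in the defining integral. The only cosmetic difference is that the paper quotes the conjugation identity for $L_{A,p}$ as a known direct computation and handles the $\nabla\cdot F$ term separately, whereas you expand the full bilinear form in one pass.
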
 
\begin{proof}
Let $\psi \in C^{1,\gamma}(\Omega)$. By direct computation we know that
for $L_{A,p}$, we have
\[
        e^{-i\psi}\circ L_{A,p }\circ e^{i\psi}=L_{A+\nabla \psi, p}.
\]
Furthermore we have that
\[
        e^{-i\psi}\circ (\nabla \cdot F) \circ e^{i\psi} = \nabla \cdot F,
\]
since for $u,v \in C^\infty_0(\Omega)$, we have that
\begin{align*}
        \big \langle e^{-i\psi}\circ (\nabla \cdot F) \circ e^{i\psi} u, v \big \rangle
        &= -\int_\Omega F \cdot \nabla ( e^{-i\psi} u e^{i\psi} v)\,dx \\
        &=  -\int_\Omega F \cdot \nabla ( uv)\,dx,
\end{align*}
where $\langle \cdot,\cdot \rangle$ stands for the  distributional duality.
Thus recalling that $q = \nabla \cdot F + p$ it follows that 
\begin{align*}
    e^{-i\psi}\circ L_{A,q }\circ e^{i\psi}=L_{A+\nabla \psi, q},
\end{align*}
which proves \eqref{eq:conj_lem}.

In order to prove \eqref{eq:conj_lem_2}, assume that $\psi|_{\p\Omega}=0$. Let 
$u \in H^1(\Omega)$ be a solution to
\[
    L_{A,q} u = 0,  \text{ in }  \Omega.
\]
By  \eqref{eq:conj_lem} we know that $e^{-i\psi}u \in H^1(\Omega)$ satisfies 
\[
    L_{A + \nabla \psi,q} (e^{-i\psi}u) = 0,  \text{ in }  \Omega.
\]
Moreover we have that $e^{-i\psi}u|_{\p\Omega} = u|_{\p \Omega}$. 
It remains hence to show that 
\[
    N_{A + \nabla \psi,q} (e^{-i\psi}u) =  N_{A,q} u,  \text{ on }  \p \Omega.
\]
To that end let $\varphi \in H^{1/2}(\p\Omega)$ and let $\phi\in H^1(\Omega)$ be such
that $\phi |_{\p\Omega} = \varphi$. Then
\begin{align*}
    \big \langle  &N_{A + \nabla \psi,q} (e^{-i\psi}u) , \varphi \big \rangle 
    = \big \langle  N_{A + \nabla \psi,q} (e^{-i\psi}u) , e^{i\psi}\varphi \big \rangle \\ 
    &\quad=  \int_\Omega \nabla(e^{-i\psi}u) \cdot \nabla(e^{i\psi} \phi) 
    +i(A +\nabla \psi) \cdot (e^{-i\psi}u \nabla(e^{i\psi} \phi) \\
    &\quad\quad-\nabla (e^{-i\psi}u) e^{i\psi} \phi) + ((A + \nabla \psi)^2 +p)u\phi- F\cdot\nabla(u\phi)\,dx \\
    &\quad=  \int_\Omega \nabla u \cdot \nabla \phi
    + iA \cdot (u \nabla \phi -\nabla u  \phi) + (A^2 +p)u\phi\\
    &\quad \quad- F\cdot\nabla(u\phi)\,dx \\
    &\quad= \big \langle  N_{A,q} u , \varphi \big \rangle.
\end{align*}

\end{proof}

The next Lemma is a slight modification of Lemma 4.2 in \cite{Sa}, we include the proof for the convenience 
of the reader. The Lemma shows that two DN-maps that coincide on small set, give two DN-maps 
that coincide on a bigger if we extend the potentials so that they are identical outside the smaller set.

\begin{lem} \label{lem_Cauchy_data_conv}
    Assume that $\Omega, \Omega'\subset \R^n$ be  bounded open sets with Lipschitz
    boundaries, such that $\ov{\Omega}\subset \Omega'$
    and let $V_1,V_2\in L^\infty(\Omega',\C^n)$.
    Denote by $\Lambda_{V_j}^\Omega$ the DN-map corresponding to the Dirichlet problem on the set 
    $\Omega$.
    Assume that 
    $V_1=V_2$ in $\Omega'\setminus\Omega$.
    If  $\Lambda_{V_1}^\Omega=\Lambda_{V_2}^\Omega$ then
    $\Lambda_{V_1}^{\Omega'}=\Lambda_{V_2}^{\Omega'}$.
\end{lem}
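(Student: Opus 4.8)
The plan is to run the standard ``gluing'' (or runaround) argument that deduces equality of the large-domain data from equality of the small-domain data together with the coincidence of the coefficients on the annular region. Fix $f\in H^{1/2}(\partial\Omega')$ and let $u_1\in H^1(\Omega')$ be the unique solution of $L_{V_1}u_1=0$ in $\Omega'$ with $u_1|_{\partial\Omega'}=f$ (unique solvability as in Theorems 8.1 and 8.3 of \cite{GT}). Restricting to the smaller set, $u_1|_\Omega$ solves $L_{V_1}u_1=0$ in $\Omega$ with trace $g:=u_1|_{\partial\Omega}$. Using the hypothesis $\Lambda_{V_1}^\Omega=\Lambda_{V_2}^\Omega$, I take $u_2\in H^1(\Omega)$ to be the solution of $L_{V_2}u_2=0$ in $\Omega$ with $u_2|_{\partial\Omega}=g$; then $u_1|_\Omega$ and $u_2$ carry the same Cauchy data on $\partial\Omega$, namely $u_2|_{\partial\Omega}=u_1|_{\partial\Omega}$ and $\Lambda_{V_2}^\Omega g=\Lambda_{V_1}^\Omega g$. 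I then define the candidate solution $w$ on $\Omega'$ by setting $w=u_2$ on $\Omega$ and $w=u_1$ on $\Omega'\setminus\Omega$.

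The first step is to check $w\in H^1(\Omega')$. This follows from the matching of the traces of $u_2$ and $u_1$ across the common interface $\partial\Omega$, by the usual gluing property for Sobolev functions whose traces agree on a Lipschitz interface. The second and central step is to show that $L_{V_2}w=0$ holds weakly in $\Omega'$. I test against an arbitrary $\phi\in H^1_0(\Omega')$ and split $\int_{\Omega'}=\int_\Omega+\int_{\Omega'\setminus\Omega}$. On $\Omega$ the integral $\int_\Omega(\nabla u_2\cdot\nabla\phi+V_2\cdot\nabla u_2\,\phi)\,dx$ equals $\langle \Lambda_{V_2}^\Omega g,\phi|_{\partial\Omega}\rangle$ by the weak definition of the DN-map from the introduction. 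On $\Omega'\setminus\Omega$, where $V_2=V_1$ by hypothesis, I subtract the identity $\int_{\Omega'}(\nabla u_1\cdot\nabla\phi+V_1\cdot\nabla u_1\,\phi)\,dx=0$ (valid since $u_1$ solves the $V_1$-problem on all of $\Omega'$ and $\phi\in H^1_0(\Omega')$) to identify that piece with $-\langle \Lambda_{V_1}^\Omega g,\phi|_{\partial\Omega}\rangle$, the sign reflecting that $\partial\Omega$ is traversed with opposite orientation from the two sides. Adding the two contributions, the boundary pairings cancel exactly because $\Lambda_{V_2}^\Omega g=\Lambda_{V_1}^\Omega g$, so the total vanishes and $L_{V_2}w=0$ in $\Omega'$.

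Finally, since $w=u_1$ on a neighborhood of $\partial\Omega'$, one has $w|_{\partial\Omega'}=f$, so by unique solvability $w$ is the solution of the $V_2$-Dirichlet problem on $\Omega'$ with data $f$, and hence its Neumann data is $\Lambda_{V_2}^{\Omega'}f$. To identify this with $\Lambda_{V_1}^{\Omega'}f$ I exploit that the defining integral of the DN-map is independent of the chosen extension of the datum $\varphi\in H^{1/2}(\partial\Omega')$: I pick an extension $\phi\in H^1(\Omega')$ supported in a collar contained in $\Omega'\setminus\overline{\Omega}$, which is possible because $\overline{\Omega}\subset\Omega'$. On that support $w=u_1$ and $V_2=V_1$, whence $\langle\Lambda_{V_2}^{\Omega'}f,\varphi\rangle=\langle\Lambda_{V_1}^{\Omega'}f,\varphi\rangle$. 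As $\varphi$ and $f$ are arbitrary, this gives $\Lambda_{V_1}^{\Omega'}=\Lambda_{V_2}^{\Omega'}$.

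The step I expect to be the main obstacle is the interface computation establishing $L_{V_2}w=0$: one must verify carefully that no singular distributional term is produced along $\partial\Omega$, and this is precisely where the equality of the two small-domain Neumann data is used, with the correct relative orientation of the normal. The trace-gluing and the final extension-independence argument are routine by comparison, and the hypothesis $V_1=V_2$ on $\Omega'\setminus\Omega$ is exactly what lets the two halves be compared with the single coefficient $V_2$.
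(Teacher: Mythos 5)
Your proposal is correct and follows essentially the same route as the paper: the glued function $w$ (equal to $u_2$ on $\Omega$ and $u_1$ on $\Omega'\setminus\Omega$) is exactly the paper's $u_2'=u_1'-(u_1-u_2)$, with the paper obtaining membership in $H^1(\Omega')$ by extending $u_1-u_2\in H^1_0(\Omega)$ by zero rather than by a trace-gluing lemma, and the interface cancellation via the equality of the two small-domain DN-maps is the same computation. Your final collar-supported test-function argument for identifying the Neumann data on $\partial\Omega'$ is a slightly more explicit version of what the paper simply asserts.
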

\begin{proof} 
Given $u_1' \in H^1(\Omega')$, solving $L_{V_1} u_1' = 0$, in $\Omega'$ we
need to find an $u_2'  \in H^1(\Omega')$ solving $L_{V_2} u_2' = 0$, in $\Omega'$ with 
$u_2'|_{\p \Omega'} = u_1'|_{\p \Omega'} $ and $\p_n u_2'|_{\p \Omega'} =\p_n u_1'|_{\p \Omega'}$.

The function $u_1 := u_1'|_{\Omega}$ solves $L_{V_1} u_1 = 0$, in $\Omega$. Let $u_2\in H^1(\Omega)$ be 
such that
$L_{V_2} u_2 = 0$ in $\Omega$ and $u_2|_{\p \Omega} = u_1|_{\p \Omega}$. We know that 
$\p_n u_2|_{\p \Omega} = \p_n  u_1|_{\p \Omega}$,
since $\Lambda_{V_1}^\Omega=\Lambda_{V_2}^\Omega$. Thus $u_1 - u_2 \in H^1_0(\Omega)$. Define
\begin{align*}
u_2' := u_1' - (u_1 - u_2), \textrm{ in }  \Omega',
\end{align*}
where we extended by $u_1 - u_2$ by zero into $\Omega'$.
Clearly $u_2' \in H^1(\Omega')$,
$u_2'|_{\p \Omega'} = u_1'|_{\p \Omega'} $ and $\p_n u_2'|_{\p \Omega'} =\p_n u_1'|_{\p \Omega'}$.

It remains to check that $L_{V_2} u_2' = 0$, in $\Omega'$ in a weak sense. 
Let $\varphi \in C^\infty_0(\Omega')$, then  
\begin{align*}
    \langle L_{V_2} u_2', \varphi \rangle_{\Omega'} 
    &= \int_{\Omega'} \nabla u_2' \cdot \nabla \varphi + V_2 \cdot \nabla u_2'\varphi \\
    &= \int_{\Omega} \nabla u_2' \cdot \nabla \varphi + V_2 \cdot \nabla u_2'\varphi 
     + \int_{\Omega'\setminus \Omega}  \nabla u_2' \cdot \nabla \varphi + V_2 \cdot \nabla u_2'\varphi \\
    &= \int_{\Omega} \nabla u_2 \cdot \nabla \varphi + V_2 \cdot \nabla u_2\varphi 
     + \int_{\Omega'\setminus \Omega}  \nabla u_1' \cdot \nabla \varphi + V_1 \cdot \nabla u_1'\varphi \\
    &= 
    \int_{\Omega'}  \nabla u_1' \cdot \nabla \varphi + V_1 \cdot \nabla u_1'\varphi \\
    &=
    \langle L_{V_1} u_1', \varphi \rangle_{\Omega'}  \\
    &= 0,
\end{align*}
where we use the fact that $u_2|_{\p \Omega} = u_1|_{\p \Omega}$, 
$u_1 = u_1'|_\Omega$  and $\Lambda^\Omega_{V_1} = \Lambda_{V_2}^\Omega$ 
to get the fourth equality.
\end{proof}

We need a similar result concerning the magnetic Schr\"odinger operator. 

\begin{lem}
\label{lem_Cauchy_data}
Let $\Omega, \Omega'\subset \R^n$ be  bounded open sets with Lipschitz boundaries, such that
$\ov{\Omega} \subset \Omega'$.  Let $A_1,A_2,F_1,F_2\in C^{0,\gamma}(\Omega',\C^n)$,
$0<\gamma \leq 1$, $p_1,p_2 \in L^\infty(\Omega',\C^n)$ and let $q_j:= \nabla \cdot F_j + p_j$.
Denote by
$C_{A_j,q_j}^{\Omega}$ the Cauchy data for $L_{A_j,q_j}$ in the set $\Omega$,
$j=1,2$. 
Assume that 
\begin{equation}
\label{eq_equality_A_q}
A_1=A_2,\;F_1=F_2\quad\textrm{and}\quad p_1=p_2, \quad \textrm{in}\quad \Omega'\setminus\Omega.
\end{equation}
If  $C_{A_1,q_1}^{\Omega}=C_{A_2, q_2}^\Omega$ then 
$C_{A_1,q_1}^{\Omega'}=C_{A_2, q_2}^{\Omega'}$.
\end{lem}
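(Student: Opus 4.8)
The plan is to mirror the gluing argument used for Lemma \ref{lem_Cauchy_data_conv}, now carried out for the bilinear form defining $L_{A,q}$ and the weak normal magnetic derivative $N_{A,q}$. Given a solution $u_1' \in H^1(\Omega')$ of $L_{A_1,q_1} u_1' = 0$ in $\Omega'$, I would like to produce a solution $u_2' \in H^1(\Omega')$ of $L_{A_2,q_2} u_2' = 0$ in $\Omega'$ carrying the same Cauchy data on $\p\Omega'$, and then argue symmetrically to obtain the reverse inclusion, so that the two Cauchy data sets coincide on $\Omega'$.

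First I would restrict to the inner domain: $u_1 := u_1'|_\Omega$ solves $L_{A_1,q_1} u_1 = 0$ in $\Omega$. Since $(u_1|_{\p\Omega}, N_{A_1,q_1}u_1) \in C_{A_1,q_1}^{\Omega} = C_{A_2,q_2}^{\Omega}$, there is a $u_2 \in H^1(\Omega)$ with $L_{A_2,q_2} u_2 = 0$ in $\Omega$ sharing the same Cauchy data on $\p\Omega$, i.e. $u_2|_{\p\Omega} = u_1|_{\p\Omega}$ and $N_{A_2,q_2}u_2 = N_{A_1,q_1}u_1$. In particular $u_1 - u_2 \in H^1_0(\Omega)$, so extending it by zero I can set $u_2' := u_1' - (u_1 - u_2) \in H^1(\Omega')$. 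By construction $u_2'|_\Omega = u_2$, while $u_2' = u_1'$ on $\Omega'\setminus\Omega$; hence $u_2'$ and $u_1'$ share the same trace and the same normal magnetic derivative on $\p\Omega'$.

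The crux is to verify $L_{A_2,q_2}u_2' = 0$ weakly in $\Omega'$. Testing against $\varphi \in C^\infty_0(\Omega')$ and splitting the defining integral over $\Omega$ and $\Omega'\setminus\Omega$, on the outer piece I use the hypothesis \eqref{eq_equality_A_q} (so that $A_1 = A_2$, $F_1 = F_2$, $p_1 = p_2$, hence $q_1 = q_2$ there) together with $u_2' = u_1'$ to replace the $(A_2,q_2,u_2')$-integrand by the $(A_1,q_1,u_1')$-integrand; the inner piece, because $u_2'|_\Omega = u_2$ solves $L_{A_2,q_2}u_2 = 0$, is by definition exactly $\langle N_{A_2,q_2}u_2, \varphi|_{\p\Omega}\rangle$, taking $\varphi|_\Omega$ as the extension of its own boundary trace. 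Performing the same splitting on the identity $\langle L_{A_1,q_1}u_1', \varphi\rangle_{\Omega'} = 0$ shows that the outer integral equals $-\langle N_{A_1,q_1}u_1, \varphi|_{\p\Omega}\rangle$. Combining the two gives $\langle L_{A_2,q_2}u_2', \varphi\rangle_{\Omega'} = \langle N_{A_2,q_2}u_2 - N_{A_1,q_1}u_1, \varphi|_{\p\Omega}\rangle = 0$ by the matching of the normal magnetic derivatives on $\p\Omega$.

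I expect the main obstacle to be the bookkeeping: correctly identifying each interior integral with the weak operator $N_{A,q}$, and making sure the first-order $iA\cdot(\cdot\,\nabla\cdot)$ terms and the distributional $-F\cdot\nabla(\cdot)$ term are split consistently, so that the interface contributions on $\p\Omega$ cancel exactly rather than leaving a spurious boundary term. Once the gluing is verified in this direction, exchanging the roles of the two indices yields the opposite inclusion and hence $C_{A_1,q_1}^{\Omega'} = C_{A_2,q_2}^{\Omega'}$.
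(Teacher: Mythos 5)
Your proposal is correct and follows essentially the same route as the paper's proof: restrict to $\Omega$, use the equality of the Cauchy data sets to find $u_2$, glue $u_2' := u_1' + (u_2 - u_1)$ with the zero extension of $u_2 - u_1 \in H^1_0(\Omega)$, and verify $L_{A_2,q_2}u_2' = 0$ by splitting the test integral over $\Omega$ and $\Omega'\setminus\Omega$, identifying the interior piece with $\langle N_{A_2,q_2}u_2,\cdot\rangle$ and cancelling against the analogous splitting for $u_1'$. The only point to flesh out, as the paper also notes only briefly, is that $N_{A_2,q_2}u_2' = N_{A_1,q_1}u_1'$ on $\p\Omega'$ requires the same splitting argument rather than being immediate ``by construction.''
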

\begin{proof}
Given $u_1' \in H^1(\Omega')$, solving $L_{A_1,q_1} u_1' = 0$, in $\Omega'$ we
need to find an $u_2'  \in H^1(\Omega')$ solving $L_{A_2,q_2} u_2' = 0$, in $\Omega'$ with 
$u_2'|_{\p \Omega'} = u_1'|_{\p \Omega'} $ and 
$N_{A_2,q_2} u_2' = N_{A_1,q_1} u_1'$. This implies that 
$C_{A_1,q_1}^{\Omega'}\subset C_{A_2,q_2}^{\Omega'}$, from which the claim follows.

Let $u_1 := u_1'|_{\Omega}$. Then $L_{A_1,q_1} u_1 = 0$, in $\Omega$. Let $u_2\in H^1(\Omega)$ be 
such that
$L_{A_2,q_2} u_2 = 0$, in $\Omega$ and $u_2|_{\p \Omega} = u_1|_{\p \Omega}$.
Because $C_{A_1,q_1}^{\Omega}=C_{A_2, q_2}^\Omega$, we 
know that $N_{A_2,q_2} u_2 = N_{A_1,q_1} u_1$, on $\p\Omega$.

In particular we have that $\varphi := u_2 - u_1 \in H^1_0(\Omega) \subset H^1_0(\Omega')$. Define
\begin{align*}
u_2' := u_1' + \varphi, \textrm{ in }  \Omega',
\end{align*}
where we extended by $u_1 - u_2$ by zero into $\Omega'$.
Clearly $u_2' \in H^1(\Omega')$,
$u_2'|_{\p \Omega'} = u_1'|_{\p \Omega'} $. We need  thus to check that
$L_{A_2,q_2} u_2' = 0$, in $\Omega'$ and that
$N_{A_2,q_2} u_2' = N_{A_1,q_1} u_1'$.

Let $\psi \in C_0^\infty(\Omega')$, then
\begin{align*}
    \langle L_{A_2,q_2} u_2', \psi \rangle_{\Omega'} = 
    \int_{\Omega'} &\nabla (u_1'+\varphi) \cdot \nabla \psi + i A_2
    \cdot ((u_1'+\varphi) \nabla \psi - \psi \nabla(u_1'+\varphi)) \\ 
    & + (A_2^2+p_2)(u_1'+\varphi)  \psi  - F_2 \cdot \nabla ( (u_1'+\varphi) \psi) \, dx.
\end{align*}
Since $u_1'+\varphi = u_2$ on $\Omega$, we have that
\begin{align*}
    \langle L_{A_2,q_2} u_2', \psi \rangle_{\Omega'} &= 
    \int_{\Omega} \nabla u_2 \cdot \nabla \psi + i A_2
    \cdot (u_2 \nabla \psi - \psi \nabla u_2) \\ 
    &\quad+ (A_2^2+p_2)u_2\psi  - F_2 \cdot \nabla ( u_2 \psi) \, dx  \\
    &+\int_{\Omega'\setminus \Omega} \nabla u_1'   \cdot \nabla \psi + i A_1
    \cdot (u_1' \nabla \psi - \psi \nabla u_1') \\ 
    &\quad+ (A_1^2+p_1)u_1'  \psi  - F_1 \cdot \nabla (u_1' \psi) \, dx  \\
    &+\int_{\Omega'\setminus \Omega} \nabla \varphi \cdot \nabla \psi + i A_1
    \cdot (\varphi\nabla \psi - \psi \nabla\varphi) \\ 
    &\quad+ (A_1^2+p_1)\varphi  \psi  - F_1 \cdot \nabla ( \varphi \psi) \, dx 
\end{align*}
The last integral is zero, since $\supp(\varphi) \subset \Omega$. Hence 
using the assumption that $N_{A_2,q_2} u_2 = N_{A_1,q_1} u_1$, on $\p\Omega$
gives
\begin{align*}
    \langle L_{A_2,q_2} u_2', \psi \rangle_{\Omega'} &= 
    \langle N_{A_2,q_2} u_2, \psi|_{\Omega} \rangle_{\p\Omega} \\
    &\quad+\int_{\Omega'\setminus \Omega} \nabla u_1'   \cdot \nabla \psi + i A_1
    \cdot (u_1' \nabla \psi - \psi \nabla u_1') \\ 
    &\quad\quad+ (A_1^2+p_1)u_1'  \psi  - F_1 \cdot \nabla (u_1' \psi) \, dx  \\
    &=\langle L_{A_1,q_1} u_1', \psi \rangle_{\Omega'} 
    =0.
\end{align*}
Thus we see that $L_{A_2,q_2} u_2' = 0$, in $\Omega'$. 

A similar deduction shows that $N_{A_2,q_2} u_2' = N_{A_1,q_1} u_1'$.
Hence we have that $C_{A_1,q_1}^{\Omega'}\subset C_{A_2,q_2}^{\Omega'}$.
\end{proof}

\section{ Appendix B -- A Carleman estimate}

In this section we prove a Carleman estimate that implies the solvability result Proposition 
\ref{solvability}, in section \ref{sec:cgo}. The proof is a straight forward extension of the one in
\cite{KU}, and we give it here for the convenience of the reader. The main concern is how to incorporate the 
$\nabla \cdot F$ term into the result in \cite{KU}. 

The estimate we are about to prove is a perturbation of  
the Carleman estimate for the Laplacian, given in \cite{STz} (see also \cite{KU}). 
We state this result as follows.

\begin{prop}
Let $\varphi(x) = \alpha \cdot x$, $\alpha \in \R^n$, $|\alpha| = 1$ and let
$\varphi_\varepsilon=\varphi+\frac{h}{2\varepsilon}\varphi^2$.  Then for
$0<h\ll \varepsilon\ll 1$ and $s\in\R$, we have
\begin{align} \label{eq:CE_lap}
\frac{h}{\sqrt{\varepsilon}}\|u\|_{H^{s+2}_{\textrm{scl}}(\R^n)}\le
C\|e^{\varphi_\varepsilon/h}h^2\Delta(e^{-\varphi_\varepsilon/h}u)\|_{H^s_{\textrm{scl}}(\R^n)},
\quad C>0,
\end{align}
for all $u\in C^\infty_0(\Omega)$.  
\end{prop}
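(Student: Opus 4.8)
The plan is to prove this by the standard conjugation-and-commutator method for semiclassical Carleman estimates, the role of the convexified weight $\varphi_\varepsilon = \varphi + \frac{h}{2\varepsilon}\varphi^2$ being to compensate for the fact that the linear weight $\varphi = \alpha\cdot x$ is only a limiting (non-strictly pseudoconvex) Carleman weight. Throughout I write $D = -i\nabla$ and reduce first to the case $s=0$; the general case follows by conjugating with the Fourier multiplier $\langle hD\rangle^s = (1+|hD|^2)^{s/2}$, which commutes with $h^2\Delta$ and, when commuted past the polynomial weight, produces only lower-order errors that are absorbed into the left-hand side in the regime $0<h\ll\varepsilon$.

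First I would compute the conjugated operator. Using $e^{\varphi_\varepsilon/h}(hD)e^{-\varphi_\varepsilon/h} = hD + i\nabla\varphi_\varepsilon$ together with $h^2\Delta = -(hD)^2$, one obtains
\begin{align*}
P_\varepsilon := e^{\varphi_\varepsilon/h}\,h^2\Delta\,(e^{-\varphi_\varepsilon/h}\,\cdot\,)
= -(hD)^2 + |\nabla\varphi_\varepsilon|^2 - i\big(hD\cdot\nabla\varphi_\varepsilon + \nabla\varphi_\varepsilon\cdot hD\big).
\end{align*}
Since $\varphi$ is linear, $\nabla\varphi_\varepsilon = (1+\tfrac{h}{\varepsilon}\varphi)\alpha$ and $\nabla^2\varphi_\varepsilon = \tfrac{h}{\varepsilon}\,\alpha\otimes\alpha$, so the entire $\varepsilon$-dependence is explicit. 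I then split $P_\varepsilon = A + iB$ into its self-adjoint and skew-adjoint parts, with $A = -(hD)^2 + |\nabla\varphi_\varepsilon|^2$ and $B = -(hD\cdot\nabla\varphi_\varepsilon + \nabla\varphi_\varepsilon\cdot hD)$ both formally self-adjoint.

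Next I would expand, for $u\in C_0^\infty$,
\begin{align*}
\|P_\varepsilon u\|_{L^2}^2 = \|Au\|_{L^2}^2 + \|Bu\|_{L^2}^2 + \big\langle i[A,B]u,\,u\big\rangle,
\end{align*}
using that $A,B$ are self-adjoint and that $i[A,B]$ is therefore self-adjoint, so the last term is real. The heart of the matter is a lower bound for the commutator term: a direct computation shows that the leading contribution of $i[A,B]$ is governed by the Hessian $\nabla^2\varphi_\varepsilon = \tfrac{h}{\varepsilon}\alpha\otimes\alpha$, producing a positive term of size $\tfrac{h^2}{\varepsilon}$ in the low-frequency regime, while $\|Au\|^2$ supplies ellipticity, hence control, at high frequencies. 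Combining the two to show that the right-hand side dominates $\tfrac{h^2}{\varepsilon}\|u\|_{H^2_{\textrm{scl}}}^2$ yields, after taking square roots, exactly the claimed factor $h/\sqrt{\varepsilon}$.

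The main obstacle is precisely this positivity-and-combination step. Because the linear weight is borderline, the commutator is not by itself elliptic, and its positivity relies entirely on the convexification term $\tfrac{h}{2\varepsilon}\varphi^2$; one must track the powers of $h$ and $\varepsilon$ carefully and use the regime $0<h\ll\varepsilon\ll 1$ to absorb the various lower-order error terms (those arising from $\Delta\varphi_\varepsilon = h/\varepsilon$, from the $\tfrac{h}{\varepsilon}\varphi$ factor in $\nabla\varphi_\varepsilon$, and from commuting the multiplier $\langle hD\rangle^s$ in the reduction to general $s$) into the good terms $\|Au\|^2$, $\|Bu\|^2$ and the commutator. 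Once the clean bound $\|P_\varepsilon u\|_{L^2}^2 \gtrsim \tfrac{h^2}{\varepsilon}\|u\|_{H^2_{\textrm{scl}}}^2$ is established for $s=0$, the stated estimate follows by the conjugation argument indicated at the outset.
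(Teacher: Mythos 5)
Your outline is correct and is the standard positive-commutator proof of this estimate: the decomposition of the conjugated operator into self-adjoint and skew-adjoint parts, the exact identity $\|P_\varepsilon u\|^2=\|Au\|^2+\|Bu\|^2+\langle i[A,B]u,u\rangle$, the observation that the convexification term $\frac{h}{2\varepsilon}\varphi^2$ makes $i[A,B]$ a positive operator of size $h^2/\varepsilon$ (since all symbols are polynomials of degree at most two, the commutator is given exactly by the Poisson bracket $\frac{4h}{\varepsilon}\big((\alpha\cdot\xi)^2+(1+\frac{h}{\varepsilon}\varphi)^2\big)$), the use of $\|Au\|$ to control $\|(hD)^2u\|$, and the conjugation by $\langle hD\rangle^{s}$ for general $s$. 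Note that the paper itself gives no proof of this proposition but quotes it from \cite{STz} (see also \cite{KU}), where precisely this argument is carried out; the only technicality your sketch glosses over is that $\varphi_\varepsilon$ is unbounded on $\R^n$, so the conjugation by the nonlocal multiplier $\langle hD\rangle^{s}$ should be performed after cutting the weight off outside a fixed neighborhood of $\Omega$, which is harmless since $u\in C_0^\infty(\Omega)$.
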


We now apply this result in the case $s=-1$ and a fixed $\varepsilon>0$ that is sufficiently small.

\begin{prop} \label{PCE}
Let $\varphi(x) = \alpha \cdot x$, $\alpha \in \R^n$ with $|\alpha| = 1$. Assume
$A,F \in L^\infty(\Omega, \C^n)$, $p \in L^\infty(\Omega,\C)$ and
$q= \nabla \cdot F + p$. Then for  $0<h\ll 1$, we have 
\begin{align} \label{eq:CE_schr}
h\|u\|_{H^{1}_{\textrm{scl}}(\R^n)}\le
C\|e^{\varphi/h}h^2L_{A,q}(e^{-\varphi/h}u) \|_{H^{-1}_{\textrm{scl}}(\R^n)},
\end{align}
for all $u\in C^\infty_0(\Omega)$.  
\end{prop}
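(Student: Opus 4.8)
The plan is to derive the perturbed Carleman estimate \eqref{eq:CE_schr} from the Carleman estimate for the Laplacian \eqref{eq:CE_lap} by treating the first-order and zeroth-order terms of $L_{A,q}$ as perturbations that can be absorbed into the left-hand side for $h$ small. First I would fix $\varepsilon > 0$ sufficiently small and apply \eqref{eq:CE_lap} with $s = -1$, obtaining
\begin{align*}
    \frac{h}{\sqrt{\varepsilon}}\|u\|_{H^{1}_{\textrm{scl}}(\R^n)} \le
    C\|e^{\varphi_\varepsilon/h}h^2\Delta(e^{-\varphi_\varepsilon/h}u)\|_{H^{-1}_{\textrm{scl}}(\R^n)}
\end{align*}
for $u \in C_0^\infty(\Omega)$. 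Since $\varphi_\varepsilon = \varphi + \frac{h}{2\varepsilon}\varphi^2$ differs from $\varphi$ by a term of order $h$, and $\varphi$ is bounded with bounded gradient on the bounded set $\Omega$, I would check that replacing $\varphi_\varepsilon$ by $\varphi$ in the conjugation changes both sides only by controllable factors; alternatively one works directly with $\varphi_\varepsilon$ throughout and notes at the end that the weights $e^{\pm\varphi/h}$ and $e^{\pm\varphi_\varepsilon/h}$ are comparable up to constants independent of $h$ on $\Omega$, so the stated estimate with $\varphi$ follows. With $\varepsilon$ fixed the factor $\frac{1}{\sqrt{\varepsilon}}$ is absorbed into the constant, leaving $h\|u\|_{H^1_{\textrm{scl}}} \le C\|e^{\varphi/h}h^2\Delta(e^{-\varphi/h}u)\|_{H^{-1}_{\textrm{scl}}}$.

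The main work is then to estimate the difference $e^{\varphi/h}h^2 L_{A,q}(e^{-\varphi/h}u) - e^{\varphi/h}h^2\Delta(e^{-\varphi/h}u)$ in the $H^{-1}_{\textrm{scl}}$-norm and show each contributing term is $o(h)\|u\|_{H^1_{\textrm{scl}}}$ (in fact $\mathcal{O}(h^2)$ or $\mathcal{O}(h)$ with a small constant) so that it can be moved to the left side. The second step, therefore, is to write out the conjugated lower-order terms: the magnetic terms $\pm i A \cdot \nabla$ and $m_A$, the potential term $(A^2 + p)$, and the distributional term $m_{\nabla\cdot F}$. Using conjugation identities of the type in \eqref{eq:conjMaMf}, each first-order term conjugates to something of size $\mathcal{O}(h)$ relative to $h^2\Delta$, and I expect to bound, for each lower-order operator $P$, a quantity of the form $\|e^{\varphi/h}h^2 P(e^{-\varphi/h}u)\|_{H^{-1}_{\textrm{scl}}} \le C h \|u\|_{H^1_{\textrm{scl}}}$ by testing against $\psi \in C_0^\infty(\Omega)$ and using the semiclassical $H^{-1}_{\textrm{scl}}$ duality together with $\|A\|_{L^\infty}, \|p\|_{L^\infty}, \|F\|_{L^\infty} < \infty$.

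The hard part will be the $\nabla\cdot F$ term, since $q = \nabla\cdot F + p$ is only a distribution and $F \in L^\infty$ is not differentiable. The remedy, following the weak definition of $m_{\nabla\cdot F}$ via $\langle m_{\nabla\cdot F}(\phi),\psi\rangle = -\int_\Omega F\cdot\nabla(\phi\psi)\,dx$, is never to differentiate $F$: when testing $e^{\varphi/h}h^2 m_{\nabla\cdot F}(e^{-\varphi/h}u)$ against $\psi$, I would move the gradient onto the product $u\psi$ (and onto the weights, which contributes the $\nabla\varphi/h$ factor) and estimate using only $\|F\|_{L^\infty}$, the Cauchy--Schwarz inequality, and the semiclassical norms. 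The factor $h^2$ combined with at most one factor of $h^{-1}$ from differentiating the exponential weight yields a net $\mathcal{O}(h)$, with the $h\nabla\psi$ and $h\nabla u$ pieces controlled by $\|\psi\|_{H^1_{\textrm{scl}}}$ and $\|u\|_{H^1_{\textrm{scl}}}$ respectively. Once every lower-order contribution is shown to be bounded by $Ch\|u\|_{H^1_{\textrm{scl}}}$ with $C$ independent of $h$, I would apply the triangle inequality and choose $h$ small enough that these terms are absorbed into $\frac{1}{2}h\|u\|_{H^1_{\textrm{scl}}}$ on the left, yielding \eqref{eq:CE_schr}.
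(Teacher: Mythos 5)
Your outline of the perturbative terms is essentially the paper's: the magnetic terms and the $\nabla\cdot F$ term are each bounded in $H^{-1}_{\textrm{scl}}$ by $\mathcal{O}(h)\|u\|_{H^1_{\textrm{scl}}}$ by testing against $\psi$ and never differentiating $F$ (note in passing that for the $F$-term the exponential weights cancel exactly in the product $e^{-\varphi/h}u\cdot e^{\varphi/h}\psi$, so no $\nabla\varphi/h$ factor even appears), and the $(A^2+p)$ term is $\mathcal{O}(h^2)$. The genuine gap is in your absorption step. You propose to fix $\varepsilon$, absorb the factor $1/\sqrt{\varepsilon}$ into the constant, and then absorb the perturbations by ``choosing $h$ small enough that these terms are absorbed into $\frac12 h\|u\|_{H^1_{\textrm{scl}}}$.'' This cannot work: the dominant perturbations (the first-order magnetic terms and the $\nabla\cdot F$ term) are bounded by $C h\|u\|_{H^1_{\textrm{scl}}}$ with $C$ depending on $\|A\|_{L^\infty}$ and $\|F\|_{L^\infty}$, i.e.\ they are of exactly the same order in $h$ as the left-hand side. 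Shrinking $h$ changes nothing, and there is no reason for $C$ to be small. Only the $\mathcal{O}(h^2)$ zeroth-order term can be absorbed by taking $h$ small.

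The missing idea is that the convexified weight is not a technical nuisance to be discarded at the start but the very mechanism of absorption: the Carleman estimate for the Laplacian gives the left-hand side $\frac{h}{\sqrt{\varepsilon}}\|u\|_{H^1_{\textrm{scl}}}$, while the perturbation bounds $\mathcal{O}(h)\|u\|_{H^1_{\textrm{scl}}}$ hold with constants uniform in $\varepsilon$ (for $0<h\ll\varepsilon\ll 1$). One must therefore keep the factor $1/\sqrt{\varepsilon}$, choose $\varepsilon$ small enough depending on $\|A\|_{L^\infty}$, $\|F\|_{L^\infty}$ and the constant in the Laplacian estimate so that $\frac{1}{\sqrt{\varepsilon}}$ beats the perturbation constants, and only then fix $\varepsilon$. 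The passage from $\varphi_\varepsilon$ to $\varphi$ is done at the very end, as you correctly anticipate, via $e^{-\varphi_\varepsilon/h}u=e^{-\varphi/h}\,e^{-\varphi^2/(2\varepsilon)}u$, using that multiplication by $e^{\pm\varphi^2/(2\varepsilon)}$ is bounded on $H^{1}_{\textrm{scl}}$ and $H^{-1}_{\textrm{scl}}$ uniformly in $h$ once $\varepsilon$ is fixed. With the order of quantifiers corrected in this way, the rest of your argument goes through.
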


\begin{proof}
Let $\varphi_\varepsilon=\varphi+\frac{h}{2\varepsilon}\varphi^2$ be the convexified weight,
with $\varepsilon >0$ and $0<h\ll \varepsilon\ll 1$. Then in the proof Proposition 2.2 in 
\cite{KU}, it is shown that
\begin{align}
\label{eq:AA_est}
\|e^{\varphi_\varepsilon/h} h^2 A\cdot D(e^{-\varphi_\varepsilon/h}u) +
e^{\varphi_\varepsilon/h} h^2 D\cdot( A e^{-\varphi_\varepsilon/h}u)
\|_{H_{\textrm{scl}}^{-1}(\R^n)}\le
\mathcal{O}(h)\|u\|_{H^1_{\textrm{scl}}(\R^n)},
\end{align}
where $D:=i^{-1}\nabla$.
Here the implicit constant depends on $\|A\|_{L^\infty(\Omega)}$, $\| \varphi \|_{L^\infty(\Omega)}$
and $\| D \varphi \|_{L^\infty(\Omega)}$ (see (2.4) in \cite{KU}). 

Furthermore, we have for all $0 \neq \psi \in C^\infty_0(\Omega)$ that
\begin{align*}
    \big| \langle e^{\varphi_\varepsilon/h} h^2 \nabla \cdot F (e^{-\varphi_\varepsilon/h} u) , \psi \rangle \big| 
    &\leq  h^2 \int_{\R^n} | F \nabla \cdot ( e^{\varphi_\varepsilon/h}u e^{-\varphi_\varepsilon/h}\psi )| \\
    &\leq  h \|F\|_{L^\infty(\R^n)} \big( \|h\nabla u\|_{L^2(\R^n)} \|\psi\|_{L^2(\R^n)} \\
    &\quad + \|u\|_{L^2(\R^n)} \|h \nabla \psi \|_{L^2(\R^n)} \big)\\
    &\leq \mathcal{O}(h)\|u\|_{H^1_{\textrm{scl}}(\R^n)}\|\psi\|_{H^1_{\textrm{scl}}(\R^n)}.
\end{align*}
It follows from the definition of the $H^{-1}_{\textrm{scl}}$-norm that 
\begin{align} \label{eq:Fterm_est}
    \|  e^{\varphi_\varepsilon/h} h^2 \nabla \cdot F
    (e^{-\varphi_\varepsilon/h} u)\|_{H^{-1}_{\textrm{scl}}(\R^n)} 
    &\leq \mathcal{O}(h)\|u\|_{H^1_{\textrm{scl}}(\R^n)}.
\end{align}
By choosing a small fixed $\varepsilon>0$ that is independent of $h$, we conclude
from estimates \eqref{eq:CE_lap},\eqref{eq:AA_est} and \eqref{eq:Fterm_est} that
\begin{align*}
\big \|&e^{\varphi_\varepsilon/h}(-h^2\Delta)(e^{-\varphi_\varepsilon/h}u) +
    e^{\varphi_\varepsilon/h} h^2 A\cdot D(e^{-\varphi_\varepsilon/h}u)\\
    &\quad + e^{\varphi_\varepsilon/h} h^2 D\cdot( A e^{-\varphi_\varepsilon/h}u)
    + e^{\varphi_\varepsilon/h} h^2 \nabla \cdot F
    (e^{-\varphi_\varepsilon/h} u) \big \|_{H_{\textrm{scl}}^{-1}(\R^n)} \\
    & \geq \frac{h}{C}\|u\|_{H^1_{\textrm{scl}}(\R^n)}.
\end{align*}
Moreover we have  that 
\begin{align*}
 \|h^2(A^2+p) u \|_{H_{\textrm{scl}}^{-1}(\R^n)} 
   \leq \mathcal{O}(h^2)\|u\|_{H^1_{\textrm{scl}}(\R^n)}.
\end{align*}
Combining the two previous estimates gives then  that 
\begin{align*}
C\|e^{\varphi_\varepsilon/h}h^2L_{A,q}(e^{-\varphi_\varepsilon/h}u) \|_{H^{-1}_{\textrm{scl}}(\R^n)}
\geq \frac{h}{C} \|u\|_{H^{1}_{\textrm{scl}}(\R^n)},
\end{align*}
where $C>0$. By using $e^{-\varphi_\varepsilon/h}u = e^{-\varphi_/h} e^{-\varphi^2/(2\varepsilon)}u$,
we obtain \eqref{eq:CE_schr}.
\end{proof}

\section*{Acknowledgements}  
The author would like to thank Katya Krupchyk and Pedro Caro for some very useful
discussions.


\begin{thebibliography}{+}


\bibitem{CNS}
J.Cheng, G. Nakamura, and E. Somersalo, Uniqueness of identifying the convection term, 
\textit{ Comm. Korean Math. Soc.}, \textbf{16} (2001), No. 3, 405–413.

\bibitem{CDK} G. Csato, B. Dacorogna, O. Kneuss, \textit{The Pullback Equation for Differential Forms}, 
Birkhauser, New York (2012).

\bibitem{ER}
G. Eskin, J. Ralston, \emph{Inverse scattering problem for the Schr\"odinger
equation with magnetic potential at a fixed energy}, 
Comm. Math. Phys. \textbf{173} (1995), no. 1, 199--224. 

\bibitem{GT}
D. Gilbarg, N.S. Trudinger, \textit{Elliptic Partial Differential Equations of Second Order},
Springer, Berlin (2001).

\bibitem{H}
L. H\"ormander, \textit{The boundary problems of physical geodesy}, Archive for Rational Mechanics and Analysis,
62 (1976), 1, p. 1-52.

\bibitem{KS}
K. Knudsen and M. Salo, Determining nonsmooth first order terms from partial boundary measurements,
Inverse Problems and Imaging, 1 (2007), no. 2, p. 349-369.

\bibitem{KU}
K. Krupchyk and G. Uhlmann, Uniqueness in an inverse boundary problem for a magnetic Schr\"odinger operator with a bounded magnetic potential, Commun. Math. Phys. 327 (2014), 993-1009.


\bibitem{Sa}
M. Salo, Inverse problems for nonsmooth first order perturbations of the Laplacian,  \textit{Ann. Acad. Sci. Fenn. Math. Diss.}  \textbf{139}  (2004).

\bibitem{STz}
M. Salo, L. Tzou, Carleman estimates and inverse problems for Dirac operators,  Math. Ann.  \textbf{344}  (2009),  no. 1, 161--184.

\bibitem{St}
E.~M.~Stein, \textit{Singular integrals and differentiability properties of functions},
Princeton University Press, New Jersey (1970).

\bibitem{S}
Z. Sun,  An inverse boundary value problem for Schr\"odinger operators
with vector potentials, Trans. Amer. Math. Soc.  \textbf{338}  (1993),  no. 2, 953--969.

\bibitem{SyU}
J.~Sylvester and G.~Uhlmann, A global uniqueness theorem for an inverse boundary value problem,
\textit{Ann. of Math.} \textbf{125} (1987), 153--169.

\bibitem{Zw}
M. Zworski, Semiclassical analysis, American Mathematical Society,  Providence Rhode Island, (2012).

\end{thebibliography}

\end{document}